\newcommand{\uni}{UNI}
\newcommand{\hypersur}[1]{\mathcal{D}_{#1}}
\theoremstyle{plain}
\newtheorem{maintheorem}{Theorem}
\newtheorem{theorem}{Theorem } 
\newtheorem{lemma}[theorem]{Lemma}
\newtheorem{definition}[theorem]{Definition}
 \theoremstyle{remark}
\newtheorem{remark}[theorem]{Remark}
\newcounter{saveenum}
\newcommand{\field}[1]{\mathbb{#1}}
\newcommand{\RR}{\field{R}}
       \newcommand{\De}{\Delta}
\newcommand{\vep}{\varepsilon}
\renewcommand{\epsilon}{\varepsilon}
\newcommand{\la} {\lambda}
\newcommand{\vfi}{\varphi}
\newcommand{\cC}{\mathcal{C}}
\newcommand{\cJ}{\mathcal{J}}
\newcommand{\cR}{\mathcal{R}}
\newcommand{\cL}{\mathcal{L}}
\newcommand{\cF}{\mathcal{F}}
\newcommand{\cU}{\mathcal{U}}
\newcommand{\cS}{\mathcal{S}}
\def \fX {{\mathfrak X}}
\newcommand{\Leb}{\mbox{Leb}}
\newcommand{\meb}{\mathbf{m}}
\newcommand{\bR}{{\mathbb R}}
\newcommand{\bE}{{\mathbb E}}
\newcommand{\bN}{{\mathbb N}}
\newcommand{\norm}[1]{\left\| #1 \right\|}
\newcommand{\abs}[1]{\left| #1 \right|}
\newcommand{\M}{M}   			
\newcommand{\Sec}{\cS}    			
\newcommand{\U}{U}   			
\newcommand{\flow}[1]{X^{#1}} 	
\begin{document}

\title[Robust exponentialy mixing Axiom A flows]{Open sets
  of Axiom A flows with Exponentially Mixing Attractors}

\author{V. Ara\'ujo}
\address{V\'itor Ara\'ujo,
Departamento de Matem\'atica, 
  Universidade Federal da Bahia\\
  Av. Ademar de Barros s/n, 40170-110 Salvador, Brazil.}
\email{vitor.d.araujo@ufba.br \& vitor.araujo.im.ufba@gmail.com}

\author{O. Butterley}
\address{Oliver Butterley,  
Fakult\"at f\"ur Mathematik, Universit\"at Wien,
Oskar-Morgenstern-Platz 1, 1090 Wien, Austria.}
\email{oliver.butterley@univie.ac.at}

\author{P. Varandas} 
\address{Paulo Varandas, 
Departamento de Matem\'atica, 
  Universidade Federal da Bahia\\
  Av. Ademar de Barros s/n, 40170-110 Salvador, Brazil.}
\email{paulo.varandas@ufba.br}

\thanks{O.B. is grateful to Henk Bruin for several
  discussions.  O.B. acknowledges the support of the
  Austrian Science Fund, Lise Meitner position M1583.
  V.A. and P.V.  were partially supported by CNPq-Brazil,
  PRONEX-Dyn.Syst. and FAPESB (Brazil).  The authors are
  deeply grateful to Ian Melbourne for helpful advice and to
  the anonymous referees for their criticism and many
  suggestions that helped to improve the article. We are
  grateful to Fran\c{c}ois Ledrappier and Viviane Baladi for
  bringing some issues to our attention and for helpful
  further discussions that lead to the errata included here
  as an appendix.}

\date{\today}

\begin{abstract}
  For any dimension $d\geq 3$ we construct $\cC^{1}$-open
  subsets of the space of $\cC^{3}$ vector fields such that
  the flow associated to each vector field is Axiom~A and
  exhibits a non-trivial attractor which mixes exponentially
  with respect to the unique {SRB} measure.
\end{abstract}

\keywords{Robust exponential decay of correlation, SRB measure, Axiom A flow}
\subjclass[2010]{Primary: 37D20, 37A25; Secondary: 37C10}

\maketitle
\thispagestyle{empty}

\section{Introduction}
The Axiom~A flows of Smale~\cite{Sm67} have been extensively
studied in the last four decades and are now relatively well
understood.  One important remaining question concerns the
rate of mixing for such flows.  
Let $\M$ be a Riemannian manifold (dimension $d\geq 3$) and
 let $\Lambda$ be a basic set
for an Axiom A flow $X^t : \M \to \M$. This means that 
 $\Lambda \subset \M$ is an invariant, closed, topologically
transitive, locally maximal hyperbolic set. 
A basic set $\Lambda$ is called an
\emph{attractor} if there exists a neighbourhood  $\U$ of $\Lambda$, and $t_0>0$, such that
$\Lambda = \bigcap_{t\in  \bR_+}\flow{t} \U$.  A basic set is \emph{non-trivial} if
it is neither an equilibrium nor a periodic solution. 
In this article we focus on the {SRB} measure; this is the invariant probability measure which
is characterised by having absolutely continuous conditional measures 
on unstable manifolds. 
 It is known that for every attractor of an Axiom~A flow
there exists a unique {SRB} measure. 
In this setting it is known that the unique SRB measure
is also the unique physical measure and is also the 
Gibbs measure associated to the potential chosen as 
  minus the logarithm
of the unstable Jacobian~\cite{Young2002}.  
We say that an invariant probability measure $\mu$ is mixing if the
correlation function
 $\rho_{\phi,\psi}(t) := \int_{\Lambda} \phi\circ X^{t} \cdot \psi \
 d\mu - \int_{\Lambda} \phi \ d\mu \cdot \int_{\Lambda} \psi \ d\mu$
tends to zero for $t\to+\infty$, for all bounded measurable
observables $\phi,\psi: U \to\RR$.  We say that it mixes
exponentially if, for any fixed H\"older exponent $\alpha\in
(0,1)$, there exist $\gamma, C>0$ such that, for all
$\phi$, $\psi$ which are $\alpha$-H\"older on $U$, $\abs{
  \rho_{\phi,\psi}(t)} \leq C \norm{\phi}_{\cC^{\alpha}}
\norm{\psi}_{\cC^{\alpha}} e^{-\gamma t}$ for all $t\geq 0$.

The conjecture that all mixing Axiom~A flows,
with respect to Gibbs measures for
  H\"older continuous potentials, mix exponentially was
proven false by considering
suspension semiflows with piecewise constant return times Ruelle~\cite{ruelle1983}
and Pollicott~\cite{Pol85} constructed examples with arbitrarily slow mixing rates.
Building on work by Chernov~\cite{chernov98},
Dolgopyat~\cite{Do98} demonstrated that
$\cC^{2+\epsilon}$ transitive Anosov flows
  with $\cC^{1}$~stable and unstable foliations which are
  jointly nonintegrable mix exponentially with respect to
  the SRB measure and
H\"older continuous observables.
Unfortunately this good regularity of the invariant
foliations is not typical~\cite{HW1999}.  Dolgopyat also
showed~\cite{dolgopyat98} that rapid mixing (superpolynomial) is typical, in a measure theoretic sense of
prevalence, for Axiom~A flows with respect to any
equilibrium state associated to a H\"older potential.
Building on these ideas Field, Melbourne and
T\"or\"ok~\cite{FMT} showed that there exist $\cC^{2}$-open,
$\cC^{r}$-dense sets of $\cC^{r}$-Axiom~A flows ($r\geq 2$)
for which each non-trivial basic set is rapid mixing. 

 It is tempting to think that exponential mixing is a robust
property, i.e., if an Axiom~A flow mixes exponentially then
all sufficiently close Axiom~A flows also mix
exponentially. This remains an open problem, even limited to
the case of Anosov flows.
Dolgopyat~\cite{Dolgopyat2000} studied suspension semiflows over 
  topologically mixing subshifts of finite type with respect to Gibbs measures and
  showed that an open and dense subset (in the H\"older topology) mix exponentially. 
  He conjectured \cite[Conjecture~1]{Dolgopyat2000} that the set of exponentially mixing flows
  contains a $\cC^{r}$-open, $\cC^{r}$-dense subset of the set of
  all Axiom~A flows. 
  Unfortunately the previous mentioned result does not help in proving this conjecture
   since the H\"older topology is pathological for these purposes 
  (for details see the second remark after Theorem 1.1 in \cite{Dolgopyat2000}).
  Liverani~\cite{liverani2004} was able to
  bypass the regularity of the stable and unstable invariant
  foliations and show exponential mixing for $\cC^4$ contact
  Anosov flows using the contact structure, an observation
  that suggests that the regularity of the invariant foliations
  is not essential.  Unfortunately, contact flows are a thin
  subset of Anosov flows (or Axiom~A flows), in particular
  there do not exist open subsets of Axiom~A flows which preserve a
  contact structure.  
 Exponential rates of mixing were proved for $\cC^2$ uniformly expanding surface suspension semiflows by  Baladi and Vall\'ee~\cite{BaVal2005} under the assumption
 that the return time function is cohomologous to a piecewise constant function.
 This was extended to arbitrary dimension by Avila, Gou\"ezel, and Yoccoz~\cite{AvGoYoc}.
In a recent preprint, Ara\'ujo and Melbourne \cite{AM2015} 
extended~\cite{BaVal2005} relaxing $C^2$ to $C^{1+\alpha}$.
These results for non-invertible flows can be applied to invertible systems
 (e.g., \cite{AvGoYoc}) when the 
stable foliation is of sufficient regularity.
  For a more complete history of the
  question  of mixing rates of hyperbolic flows 
 we refer to the reader  the introductions
  of~\cite{liverani2004, FMT}.

The purpose of this paper is to construct open sets of
Axiom~A flows which mix exponentially with
  respect to the SRB measure of its attractor, thus
  making the step from open sets of symbolic flows to open
  sets of Axiom A flows at the cost of assuming that the
  stable foliation is $\cC^2$-smooth. The existence of such
  open sets of Axiom~A flows was expected
 \cite{Dolgopyat2000}.
   The first and third author
previously constructed~\cite{ArVar} open sets of
three-dimensional singular hyperbolic flows (geometric
Lorenz attractors) which mix exponentially with respect to
the unique SRB measure.  It was unclear if the singularity 
actually aids the mixing and allows for the robust
exponential mixing. In this article we show that we do not
need  the
singularity but actually we can just take advantage of the
volume contraction of the flow (and consequently a
domination condition) in order to carry out a similar
construction.

\section{Results \& Outline of the Proof}

Given an Axiom~A flow $\flow{t} : \M \to \M$
associated to a vector field $X$ we consider
the $\cC^{r}$ distance on the space of $\cC^r$-vector fields
$\fX^r(\M)$, that induces a natural distance on
the space of flows.  
The following two theorems are the main results of this article.

\begin{maintheorem}\label{thm:main}
  Given any  Riemannian manifold $\M$ of dimension
  $d\geq 3$ there exists a $\cC^{1}$-open subset of
  $\cC^3$-vector fields $\cU \subset \fX^3(M)$ such that for
  each $X\in\cU$ the associated flow is Axiom~A and exhibits
  a non-trivial attractor which mixes exponentially with
  respect to the unique SRB measure.
\end{maintheorem}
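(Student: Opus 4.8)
\textbf{Proof proposal for Theorem~\ref{thm:main}.}

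The plan is to build the required open set by a ``bump-and-suspend'' construction: start with an explicit Axiom~A attractor for which the return map to a suitable Poincar\'e section is a smooth uniformly hyperbolic (solenoid-type or Plykin-type) map, and then engineer the roof function so that the associated suspension semiflow mixes exponentially in a way that survives $\cC^1$-small perturbations of the vector field. First I would fix a model: on $M^d$ take a $\cC^3$ vector field whose flow has an attractor $\Lambda$ admitting a global cross-section $\Sec$ on which the Poincar\'e return map $F:\Sec\to\Sec$ is a $\cC^2$ uniformly hyperbolic skew-product over an expanding map $f$ of the unstable direction, with one-dimensional unstable bundle. The key structural point to arrange is that $F$ be partially hyperbolic with \emph{strong volume contraction} transverse to the unstable direction, so that the stable holonomies are $\cC^2$ (this is where the domination/bunching hypothesis enters, replacing the generically-false $\cC^1$ regularity of the invariant foliations used in Dolgopyat's original argument); I would cite a Hirsch--Pugh--Shub type bunching criterion to get $\cC^2$ stable foliation and, crucially, note that this bunching is $\cC^1$-open.

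Next I would reduce exponential mixing of the flow to a Dolgopyat-type spectral estimate for the transfer operators $\mathcal L_{s}$ of the return map twisted by $e^{-s\tau}$, where $\tau$ is the roof (first-return time). Quotienting along the $\cC^2$ stable foliation collapses $F$ to the one-dimensional expanding map $f$ and $\tau$ to a $\cC^1$ (indeed $\cC^{1+\alpha}$, by the $\cC^2$ holonomies) roof $\bar\tau$ on the expanding quotient. The core analytic input is then the contraction estimate of Dolgopyat for the family $\mathcal L_s$ acting on a suitable Banach space: one needs a \emph{uniform nonintegrability condition} (UNI/temporal distortion) on $\bar\tau$ along the branches of $f$. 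The essential move is to verify that (i) such a roof can be built into the original model by choosing the cross-section and the time reparametrisation appropriately, and (ii) the UNI condition is stable under $\cC^1$-perturbations of the vector field --- since UNI is an open condition quantified by a lower bound on a finite-order jet of finitely many branch compositions of $\bar\tau$, and the passage (vector field)$\mapsto$($F$,$\tau$)$\mapsto$($f$,$\bar\tau$) is continuous in the relevant topology because the stable holonomies vary continuously in $\cC^{1+\alpha}$ under the bunching assumption. Combining the Dolgopyat estimate with the standard Laplace-transform/Pollicott--Ruelle argument then yields a spectral gap for the flow's transfer operator, hence exponential decay of correlations for H\"older observables against the SRB measure, for every $X$ in the $\cC^1$-neighbourhood.

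The main obstacle, and the place where most of the work will go, is step (ii): showing that the UNI/nonintegrability condition genuinely persists under $\cC^1$-perturbations rather than merely $\cC^3$ ones. The difficulty is that $\bar\tau$ is obtained only after quotienting by the stable foliation, and a priori the foliation --- and hence $\bar\tau$ --- depends on the perturbation in a way that is only $\cC^0$ in the $\cC^1$-topology on vector fields; one must therefore show that the relevant \emph{jets} of $\bar\tau$ that control UNI depend continuously (and the inequality being open, this suffices), which forces one to exploit the $\cC^2$-smoothness and bunching of the stable foliation quantitatively. A secondary technical point is to ensure the perturbed flow remains Axiom~A with the attractor persisting (structural stability handles this, but one must check the cross-section and return-time structure survive) and that the $\cC^2$ regularity of the stable foliation is not lost --- again handled by the openness of the bunching condition, which is why the construction only needs $\cC^1$-openness in $\cC^3$ vector fields. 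Once these persistence statements are in place, the proof of Theorem~\ref{thm:main} reduces to quoting the abstract exponential-mixing criterion for suspension semiflows over one-dimensional expanding maps with UNI roof.
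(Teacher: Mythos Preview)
Your overall architecture matches the paper's: build an explicit Axiom~A attractor with strong enough stable contraction that the bunching/domination condition~\eqref{eq:kdomination} holds (hence $\cC^2$ stable foliation by Hirsch--Pugh--Shub, and this is $\cC^1$-open), quotient along stable leaves to a suspension over an expanding Markov map, and invoke a Dolgopyat-type criterion for exponential mixing of the quotient semiflow. The paper black-boxes that last step via Avila--Gou\"ezel--Yoccoz rather than re-running the Dolgopyat oscillatory-cancellation argument, but that is a matter of packaging.

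The substantive divergence is exactly at your step~(ii), the robustness of UNI, which you correctly flag as the main obstacle and do not actually resolve. Your plan is to show that the finite-order jets of the quotiented roof $\bar\tau$ depend continuously on the vector field in the $\cC^1$ topology; as you note, this is delicate because the stable foliation (and hence the quotient map) is only $\cC^0$ in the $\cC^1$ topology on vector fields, so controlling \emph{derivatives} of $\bar\tau$ uniformly is genuinely problematic. The paper sidesteps this entirely: it proves (Lemma~\ref{lem:UNI}) that the failure of~$(\star)$ --- the roof being cohomologous to a piecewise constant function --- is \emph{equivalent} to joint integrability of the stable and unstable foliations of the original flow. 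Non-joint-integrability is then a flow-level geometric condition detected by the temporal distance function $\varphi(x,y)$, defined via $[x,y]_u = X^{\varphi(x,y)}([x,y]_s)$, and Field--Melbourne--T\"or\"ok show that $\varphi$ depends continuously on the flow in the $\cC^1$ topology. Hence ``$\varphi\not\equiv 0$'' is $\cC^1$-open, with no need to track the quotient construction under perturbation at all. This is the missing idea in your proposal: pull the nonintegrability condition back from the quotient to the ambient flow, where its openness is already known.

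A minor point: you restrict to one-dimensional unstable bundle; the paper does not need this (the AvGoYoc result applies to expanding Markov maps of any dimension), though the concrete examples constructed in Lemma~\ref{lem:exists} do have one-dimensional unstable.
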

\noindent
As far as the authors are aware, this is the first
result concerning the existence of robustly exponentially
mixing Axiom~A flows.  The strategy for the construction of
the open sets in the above theorem is similar to the one
developed in \cite{ArVar} for singular flows.
Theorem~\ref{thm:main} is a consequence (details in
Section~\ref{sec:mainproof}) of the following more
fundamental result.

 \begin{maintheorem}\label{thm:expmix}
   Suppose that $\flow{t} : \M \to \M$ is a $\cC^2$ Axiom~A
   flow, $\Lambda$ is an attractor, and  that the stable foliation is
   $\cC^2$.  
   If the stable and unstable foliations
    are not jointly integrable
    then the flow mixes exponentially with
   respect to the unique SRB measure for
     $\Lambda$.
 \end{maintheorem}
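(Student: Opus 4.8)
The plan is to reduce the flow to a suspension semiflow over a hyperbolic map with a roof function, and then to apply Dolgopyat-type estimates to the associated transfer operator. First I would take a well-adapted Poincar\'e section $\Sec$ transverse to the flow, with the return map $\F:\Sec\to\Sec$ and return time function $\tau:\Sec\to\bR_+$; since $\Lambda$ is an attractor for an Axiom~A flow, $\F$ can be chosen to be a (piecewise) $\cC^2$ uniformly hyperbolic map carrying the SRB measure of $\Lambda$ to an equilibrium state, and $\tau$ is bounded away from $0$ and $\infty$ and is as smooth as the flow along the flow direction. The hypothesis that the stable foliation is $\cC^2$ is exactly what is needed to quotient along stable leaves: it guarantees that the holonomy maps are $\cC^2$, so that after passing to the stable quotient $\bar\Sec$ we obtain a genuinely smooth (piecewise $\cC^2$) expanding map $\bar\F:\bar\Sec\to\bar\Sec$, together with a roof function $\bar\tau$ that is cohomologous to $\tau$ and inherits the needed regularity (in particular a $\cC^1$ or Lipschitz representative on $\bar\Sec$). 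This is the step I expect to be the technical heart of the argument, since one must track precisely how much regularity of $\tau$ and of the potential survives the quotient, and verify that the reduction preserves the correlation decay of H\"older observables for the flow up to controllable error terms (standard approximation arguments, e.g.\@ as in \cite{Do98, FMT}, reduce H\"older observables on $\M$ to observables depending only on the expanding coordinate plus coboundaries).

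Next I would set up the transfer operators $\cL_{s}$ (for $s=\sigma+ib\in\CC$) acting on an appropriate Banach space of functions on $\bar\Sec$ (a space of H\"older or $\cC^1$ functions, or a Sobolev/anisotropic space if one prefers), twisted by $e^{-s\bar\tau}$ and weighted by the Gibbs potential $-\log|\det D\bar\F|_{E^u}|$ so that the SRB measure is the reference. The Laplace transform of the correlation function $\hat\rho_{\phi,\psi}(s)$ then extends meromorphically to a neighbourhood of $\{\Re s\ge 0\}$, with a simple pole at $s=0$ coming from the leading eigenvalue $1$ of $\cL_0$, and exponential mixing is equivalent to a spectral gap uniform in $b$: one needs $\|\cL_s^n\|\le C\theta^n$ for some $\theta<1$ uniformly for $|\Im s|$ large and $\Re s$ in a strip $(-\delta,0]$, together with control of $\cL_s$ in the remaining compact part of the strip via the absence of resonances on $\Re s = 0$ other than $s=0$ (this last point uses topological mixing of the flow, equivalently that $\bar\tau$ is not cohomologous to a function valued in a discrete subgroup).

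The non-joint-integrability hypothesis enters precisely at the large-$b$ estimate, through Dolgopyat's oscillatory cancellation mechanism: non-integrability of the stable and unstable foliations translates, after the reduction, into a \emph{uniform non-integrability} (UNI) condition on the temporal distance function built from $\bar\tau$ along pairs of distinct inverse branches of $\bar\F^n$ — the relevant directional derivatives of $\bar\tau$ are not identically constant on any local stable/unstable product. Granting UNI, one runs the Dolgopyat $L^2$-Dolgopyat/Chernov argument: for $|b|$ large one chooses $n\sim\log|b|$, partitions $\bar\Sec$ into pieces on which one either has the full expansion estimate, or else constructs a Dolgopyat operator $\cN$ by cutting the weight on a definite-measure subset where oscillation forces cancellation, yielding $\|\cL_s^{n}w\|_{L^2(\bar\Sec)}\le (1-\eta)\|w\|$ with $\eta>0$ independent of $b$; combined with a Lasota–Yorke inequality on the $\cC^1$ (or H\"older) norm this upgrades to the uniform spectral gap. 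For the compact part $|b|\le B$ one uses that $\cL_{ib}$ has spectral radius $<1$ on the quotient of constants (again topological mixing), and continuity in $s$ gives a strip. Finally I would invert the Laplace transform (a Paley–Wiener / contour-shift argument) to conclude $|\rho_{\phi,\psi}(t)|\le C\|\phi\|_{\cC^\alpha}\|\psi\|_{\cC^\alpha}e^{-\gamma t}$, and check that the error terms from the coboundary approximations in the first step are themselves exponentially small, completing the proof. The main obstacle, as noted, is the quotient step: making rigorous that the $\cC^2$ stable foliation yields a reduced system to which the (by now standard) Dolgopyat machinery applies with all regularity and measure-theoretic bookkeeping intact.
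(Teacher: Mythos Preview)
Your outline is correct in its broad architecture --- reduce to a suspension over an expanding map by quotienting along the $\cC^2$ stable foliation, then use non-joint-integrability to drive a Dolgopyat-type cancellation --- but the paper executes this differently in two respects worth noting. First, rather than running the Dolgopyat $L^2$ argument by hand, the paper packages the expanding semiflow so as to invoke \cite[Theorem~2.7]{AvGoYoc} as a black box: it constructs the local sections explicitly as $\cS_\epsilon(x)=\bigcup_{y\in W^u_\epsilon(x)} W^s_\epsilon(y)$ (so that the return time is \emph{already} constant on stable leaves, no coboundary correction needed), takes a Bowen--Markov family, then induces to a first-return map on one partition element to obtain a full-branch countable-Markov expanding map with exponential tails. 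The smooth disintegration of the SRB measure along stable leaves (needed for the ``hyperbolic skew-product'' hypothesis of \cite{AvGoYoc}) is imported from \cite{BM2015}. Second, and more substantively, the non-integrability hypothesis is not converted into a quantitative UNI bound as you propose; instead the paper proves (Lemma~\ref{lem:UNI}) the purely qualitative dichotomy that the induced roof $r$ is $\cC^1$-cohomologous to a piecewise constant function \emph{if and only if} the stable and unstable foliations are jointly integrable. This weaker condition is exactly what \cite{AvGoYoc} requires, and the upgrade to effective oscillation is absorbed into their proof. Your direct-Dolgopyat route would work but would oblige you either to extract a uniform lower bound on the temporal distance function from non-joint-integrability (a compactness argument, but one that needs care in the induced/countable-branch setting) or to reprove the relevant portion of \cite{AvGoYoc}; the paper's route trades that labour for the bookkeeping of matching the \cite{AvGoYoc} hypotheses.
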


\noindent
The proof of the above is described in
Section~\ref{sec:expmix} and involves quotienting along
stable manifolds of a well-chosen Poincar\'e section to
reduce to the case of a suspension semiflow over an
expanding Markov map. We can then apply the result of 
~\cite{AvGoYoc}
which implies
exponential mixing for the semiflow unless the return time
function is cohomologous to a piecewise constant function.
This is then related to the exponential mixing for the original flow
and the joint integrability of the stable and unstable foliations.
It is expected that~\cite{AM2015} would allow the $\cC^{2}$ requirement 
for the stable foliation to be weakened to $\cC^{1+\alpha}$.
We have no reason to believe that the requirement of good regularity 
(better than $\cC^1$) of the
 stable foliation is essential to the above
theorem, however the present methods rely heavily on this
fact. Note that although we require this good regularity of
the stable foliation we have no requirements on the
regularity of the unstable foliation. 
We observe that the required good regularity of the stable foliation can hold robustly, in contrast to
Dolgopyat's original argument~\cite{Do98} which required $\cC^{1}$
regularity for both the stable and unstable foliations.
 
The following questions remain: Are all exponentially mixing
Axiom~A flows also robustly exponentially mixing?  And the above mentioned 
conjecture, for any $r > 1$, does the set of exponentially mixing Axiom A flows
contain an $\cC^{r}$-open and dense subset of the set of all
Axiom A flows? 
It would appear that both these questions are of a higher order of
difficulty.
 
 The joint nonintegrability of stable and unstable foliations (as 
 required in Theorem~\ref{thm:expmix}) can be
seen in several different ways.  The stable and unstable
foliations of an Axiom~A flow are always transversal,
consequently, if they are jointly integrable, this provides
a codimension one invariant foliation which is transversal
to the flow direction. Conversely, if there exists a
codimension one invariant foliation which is transversal to
the flow direction, then this foliation must be subfoliated
by both the stable and unstable foliations which must
therefore be jointly integrable.  
In this case
it is known~\cite[Proposition~3.3]{FMT} that the flow is
(bounded-to-one) semiconjugate to a locally constant
suspension over a subshift of finite type.  
Such a flow need not mix, or may mix slower than exponentially.

The additional ingredient in order to use
Theorem~\ref{thm:expmix} to prove Theorem~\ref{thm:main} is
a result concerning the regularity of foliations. 
  Let $\|\cdot\|$
denote the Riemannian norm on the tangent space of $\M$.  As
before $\flow{t}:\M \to \M$ is an Axiom~A flow and $\Lambda$
is a non-trivial basic set. Since the flow is Axiom~A the
tangent bundle restricted to $\Lambda$ can be written as the
sum of three $DX^{t}$-invariant continuous subbundles,
$T_{\Lambda}\M = \bE^{s} \oplus \bE^{c} \oplus \bE^{u} $
where $ \bE^{c} $ is the one-dimensional bundle tangent to
the flow and there exists $C,\lambda >0$ such that $\norm{
  \left. D\flow{t}\right|_{\bE^{s}} } \leq Ce^{-\lambda t}$,
and $\norm{ \left. D\flow{-t}\right|_{\bE^{u}} } \leq
Ce^{-\lambda t}$, for all $t\geq 0$.
 
\begin{theorem}
 \label{thm:regularity}
 Suppose that $\Lambda$ is an attractor for the $\cC^3$
 Axiom~A flow $\flow{t} : \M \to \M$.  Further suppose that
  \begin{equation}\label{eq:kdomination}
\sup_{x\in \Lambda} \norm{ \left. D\flow{t_0}_{x}\right|_{\bE^{s}}  } \cdot 
 \norm{ \smash{ D\flow{t_0}_{x} }  }^{2} < 1,
 \end{equation}
 for some $t_0 >0$. 
 Then the stable foliation of $X^{t}$ is $\cC^{2}$.
 \end{theorem}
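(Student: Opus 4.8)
The plan is to exhibit the strong stable distribution $\bE^{s}$ as the attracting invariant section of a fibre-contracting bundle map and to read off $\cC^{2}$ regularity from the $\cC^{r}$-section theorem of Hirsch, Pugh and Shub, with~\eqref{eq:kdomination} serving precisely to verify the $2$-normal-contraction hypothesis. As a first step I would reduce to showing that $\bE^{s}$ is a $\cC^{2}$ distribution: the stable foliation $W^{ss}$ already exists, with leaves as smooth as the flow, by the stable manifold theorem, and extends to a continuous foliation with smooth leaves of an isolating neighbourhood $\U$ of $\Lambda$ (with $\flow{t_{0}}\overline{\U}\subset\U$); its tangent distribution is the continuous extension of $\bE^{s}$ to $\U$, and a $\cC^{2}$ integrable distribution is tangent to a $\cC^{2}$ foliation. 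Since $\bE^{u}$ plays no role here, I also record the domination $\|D\flow{t}_{x}|_{\bE^{s}}\| < m(D\flow{t}_{x}|_{\bE^{cu}})$ for large $t$ (standard for Axiom~A; here $m(\cdot)$ is the conorm $m(A)=\|A^{-1}\|^{-1}$), which is what makes $\bE^{s}$ the contracting direction in the fibres below.

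Next I would set up the bundle map. Over $\overline{\U}$ form the Grassmann bundle $\cG$ of $(\dim\bE^{s})$-planes; near $\bE^{s}$ a fibre element over $x$ is the graph of a linear map $\bE^{s}_{x}\to\bE^{cu}_{x}$. Let $F(x,V)=(\flow{-t_{0}}x,\;D\flow{-t_{0}}_{x}V)$, a bundle map covering the \emph{backward} flow $\flow{-t_{0}}$ which fixes the section $x\mapsto\bE^{s}_{x}$. Using invariance of the splitting one checks that $F$ contracts fibres towards this section at rate at most $\nu(x)=\|D\flow{-t_{0}}_{x}|_{\bE^{cu}}\|\cdot\|(D\flow{-t_{0}}_{x}|_{\bE^{s}})^{-1}\|$. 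The $\cC^{2}$-section theorem then gives that the invariant section is $\cC^{2}$, provided $F$ is a $\cC^{2}$ bundle map — which holds because the vector field, hence the flow together with its first derivative, is $\cC^{3}$ — and provided the $2$-normal-contraction inequality $\nu(x)\,\|(D\flow{-t_{0}}_{x})^{-1}\|^{2}<1$ holds at every $x$.

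The crux is to deduce this last inequality from~\eqref{eq:kdomination}. Writing $y=\flow{-t_{0}}x$ and using $(D\flow{-t_{0}}_{\flow{t_{0}}y})^{-1}=D\flow{t_{0}}_{y}$, the inequality becomes
\[
  \frac{\|D\flow{t_{0}}_{y}|_{\bE^{s}}\|\cdot\|D\flow{t_{0}}_{y}\|^{2}}{m\!\left(D\flow{t_{0}}_{y}|_{\bE^{cu}}\right)}<1 .
\]
The numerator is $<1$ by~\eqref{eq:kdomination}, but the denominator need not exceed $1$, since the neutral flow direction sits inside $\bE^{cu}$. To absorb it I would pass to a large iterate $\flow{nt_{0}}$: submultiplicativity of the operator norm gives $\sup_{y}\|D\flow{nt_{0}}_{y}|_{\bE^{s}}\|\,\|D\flow{nt_{0}}_{y}\|^{2}\le\kappa^{n}$, where $\kappa<1$ is the supremum in~\eqref{eq:kdomination}, whereas $m(D\flow{t}_{y}|_{\bE^{cu}})\ge c>0$ for all $t\ge T_{0}$ with $c$ independent of $t$ — because $\bE^{cu}=\bE^{c}\oplus\bE^{u}$, these summands make a bounded-below angle, $\bE^{c}$ is neutral, and $\bE^{u}$ is expanded. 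Hence for $n$ large the displayed inequality holds with $nt_{0}$ in place of $t_{0}$, so the $\cC^{2}$-section theorem applies to $F$ covering $\flow{-nt_{0}}$ and yields that $\bE^{s}$ is $\cC^{2}$; as noted above this gives the $\cC^{2}$ stable foliation.

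I expect two points to require the most care. The first is carrying out the reduction for an \emph{attractor} rather than the whole manifold: one must work with the non-invariant continuous extensions of $\bE^{s}$ and $\bE^{cu}$ to $\overline{\U}$ and invoke the overflowing (laminated) version of the section theorem, or first establish $\cC^{2}$ regularity along $\Lambda$ in the Whitney sense and then extend. The second is the honest bookkeeping behind the displayed estimate — confirming that the lower bound $c$ on $m(D\flow{t}|_{\bE^{cu}})$ is genuinely uniform in $t$, and that composing $\flow{nt_{0}}$ out of copies of $\flow{t_{0}}$ respects all the norm inequalities on $\overline{\U}$ (up to harmless $(1+\varepsilon)^{n}$ errors from non-invariance) — so that the robust, operator-norm bunching~\eqref{eq:kdomination} really does imply the sharp normal-contraction hypothesis of the section theorem. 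Passing back from the $\cC^{2}$ distribution to the $\cC^{2}$ foliation, and identifying this foliation with $W^{ss}$, is then routine.
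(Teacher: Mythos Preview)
Your proposal is correct and is precisely the Hirsch--Pugh--Shub $\cC^{r}$-section argument that the paper invokes; the paper does not supply its own proof of this theorem but simply records it as a well-known consequence of the arguments in \cite{HPS77} and of the proof of \cite[Theorem~6.1]{HP70}, and your write-up fills in exactly those details (Grassmann bundle, graph transform covering the backward time-$t_{0}$ map, verification of the $2$-normal-contraction condition via~\eqref{eq:kdomination} after passing to a large iterate, and the overflowing version needed because $\Lambda$ is only an attractor). There is nothing to compare: your approach and the one the paper defers to are the same.
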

 
 \noindent
 This is a well-known consequence of the
 arguments
   described by Hirsch, Pugh, and
 Shub~\cite{HPS77} and also in the proof of
 \cite[Theorem~6.1]{HP70}.
We refer the reader to Section~6 in \cite{PSW} for a discussion
on three definitions for $C^k$-smoothnesss of foliations.
We observe that in our context, where $k>1$, the results from \cite{HP70,HPS77} provide the strongest of these
three notions, where the smoothness of the stable foliation is obtained
when it is regarded as a section into a certain Grassmannian (cf.\@ \cite{PSW}). 
This is enough to guarantee that local cross-sections obtained by collecting 
local stable manifolds through a local unstable manifold and stable holonomies
are indeed $C^2$ smooth (cf.\@ construction of Poincar\'e sections and return maps 
in Section~\ref{sec:expmix}).
 
This question of regularity is a subtle
issue. For Anosov flows, one cannot in general
expect the stable foliation (or equivalently the unstable
foliation) to have better regularity than
H\"older~\cite{HW1999}.  If the invariant
  foliation of interest has codimension one, then $\cC^1$
  regularity can be obtained; see e.g. \cite[Appendix
  1]{PT93}.  
  However the stable foliation of a hyperbolic
  flow can never have codimension one since in the splitting
  $T_{\Lambda}\M = \bE^{s} \oplus \bE^{c} \oplus \bE^{u}$
  the complementary direction to $\bE^s$ is
  $\bE^c\oplus\bE^u$.

We will construct open sets of Axiom~A flows which satisfy
the assumptions of Theorem~\ref{thm:regularity}.
Note that the domination condition \eqref{eq:kdomination}
     implies that the flows we consider are volume
     contracting and so the attractor has zero volume and
     consequently the flow is necessarily not Anosov.
  Since the domination
   condition 
   is an open condition it implies that there exist open
   sets of Axiom~A flows which possess a $\cC^{2}$ stable
   foliation.  
   Since the non-joint integrability of stable and unstable foliations
   is an open and dense condition (see e.g.~\cite[Remark 1.10]{FMT} and references therein), 
   to prove that robust exponential mixing does exist we apply Theorem~\ref{thm:expmix}. 
  Details of this argument are given in
   Section~\ref{sec:mainproof}.

 \section{Axiom~{A} attractors with $C^2$-stable foliation.}
 \label{sec:expmix}

 The purpose of this section is to prove
 Theorem~\ref{thm:expmix}. For the duration of this section
 we fix $X \in \fX^{2}(\M)$ where $M$ is $d$-dimensional and
 assume that $\flow{t} : \M \to \M$ is a $\cC^2$ Axiom~A
 flow and $\Lambda\subset M$ is an
 attractor (in particular  topologically transitive) with a
 $\cC^2$ stable foliation.  
 
 Given $x\in \Lambda$ we denote
 by
  $ W_{\epsilon}^{s}(x)=\{y\in M : d(X^t(y), X^t(x)) \le
 \epsilon, \; \forall t\ge 0 \text{ and } d(X^{t}(y), X^{t}(x)) \to 0 \text{ as } t\to +\infty \} $
  the \emph{local (strong)
   stable manifold} of $x$ which is forward invariant, that
 is, $X^t(W^s_\epsilon(x)) \subset W^s_\epsilon (X^t(x))$
 for every $x\in \Lambda$ and $t\ge 0$.  Consider also the
 \emph{local centre-stable manifold} of $x$ defined as $
 W_{\epsilon}^{cs}(x) =\cup_{|t|\le \vep}
 X^t(W^s_\epsilon(x)) $.
 The local (strong) unstable and
 centre-unstable manifolds, $W_{\epsilon}^{u}(x)$ and
 $W_{\epsilon}^{cu}(x)$ respectively, correspond to the
 local strong stable and local centre-stable manifolds for the flow
 $(X^{-t})_t$.  It is known~\cite{Bo73} that the
 attractor $\Lambda$ has a local product structure.  This
 means that there exists an open neighbourhood $\cJ$ of the
 diagonal of $\M \times \M$ and $\epsilon>0$ such that, for
 all $(x,y) \in \cJ_{\Lambda} := \cJ \cap (\Lambda \times
 \Lambda)$, then $W_{\epsilon}^{cu}(x) \cap
 W_{\epsilon}^{s}(y) \neq\emptyset$ and $W_{\epsilon}^{u}(x)
 \cap W_{\epsilon}^{cs}(y) \neq\emptyset$ each consist of a
 single point and this intersection point
 belongs to $\Lambda$.  
 In this case it makes sense to
 consider the continuous maps $\left[ \cdot,
   \cdot\right]_{s} : \cJ_{\Lambda} \to \Lambda$ and $\left[
   \cdot, \cdot\right]_{u} : \cJ_{\Lambda} \to \Lambda$
 defined by
\[
W_{\epsilon}^{cu}(x) \cap W_{\epsilon}^{s}(y) = \left\{ \left[ x, y\right]_{s} \right\},
\quad
W_{\epsilon}^{u}(x) \cap W_{\epsilon}^{cs}(y) = \left\{ \left[ x, y\right]_{u} \right\}.
\]
Following~\cite[\S 4.1]{FMT}
  we note that the set $\cJ$, and $\epsilon>0$ may be
  chosen fixed for some $\cC^{1}$-open set $\mathcal{U}
  \subset \fX^{1}(\M)$ containing any given initial Axiom A flow.
\begin{definition}[{\cite{Bo73}}]
A differentiable closed $(d-1)$-dimensional disk $\cS \subset \M$, transverse
to the flow direction is called a \emph{local
  cross-section}.  A set $\cR \subset \Lambda \cap \cS$ is called
a \emph{rectangle} if $W_{\epsilon}^{cs}(x) \cap
W_{\epsilon}^{cu}(y) \cap \cR $ consists of exactly one
point for all $x,y\in \cR$.  
\end{definition}
Let $\cR_{i}^{*}$ denote the
interior of $\cR_{i}$ as a subset of the metric space
$\cS_{i} \cap \Lambda$.  
\begin{definition}[{\cite{Bo73}}]
A finite set of rectangles
$\mathbf{R} = {\{ \cR_i \}}_i$ is called a \emph{proper
  family} (of size $\epsilon$) if
 \begin{enumerate}
 \item
 $\Lambda = \cup_{t\in [-\epsilon,0]} X^{t}  \left(\cup_{i}  \cR_{i}\right)=: X^{[-\epsilon,0]} \left(\cup_{i}  \cR_{i}\right)$,
  \setcounter{saveenum}{\value{enumi}}
\end{enumerate}
and there exist local sections ${\{\cS_{i} \}}_{i} $ of
diameter less than $\epsilon$ such that
\begin{enumerate}
  \setcounter{enumi}{\value{saveenum}}
  \item
  $\cR_{i} \subset \operatorname{int}(\cS_{i})$
  and $\cR_{i} = \overline{\cR_{i}^{*}}$,
 \item
 For $i\neq j$, at least one of the sets $\cS_{i}  \cap \flow{[0,\epsilon]} \cS_{j}$ and 
 $\cS_{j} \cap \flow{[0,\epsilon]} \cS_{i}$ is empty.
  \end{enumerate} 
  \end{definition} 
  Given a proper family as above, let $\Gamma := \cup_{i} \cR_{i}$, and denote by $P$ the Poincar\'e return
  map to $\Gamma$ associated to the flow $\flow{t}$, and by $\tau$ the return time. 
Although  $P$ and $\tau$ are not continuous on $\Gamma$, they are continuous on
  \[
  \Gamma' := \left\{  x\in \Gamma : P^{k}(x) \in \smash{ \cup_{i} } \cR_{i}^{*} \text{ for all $k\in \mathbb{Z}$}\right\}.
  \]
  \begin{definition}[{\cite{Bo73}}] 
    A proper family $\mathbf{R} = {\{ \cR_i \}}_i$ is a
    \emph{Markov family} if
\begin{enumerate}
\item
$x \in U(\cR_{i}, \cR_{j}) :=   \overline{\left\{   w \in \Gamma' : w\in \cR_{i}, P(w) \in \cR_{j}  \right\} } $ implies  
$\cS_{i} \cap W_{\epsilon}^{cs}(x) \subset U(\cR_{i},\cR_{j})$,
\item
$y \in V(\cR_{k}, \cR_{i}) :=   \overline{\left\{   w \in \Gamma' : P^{-1}(w)\in \cR_{k}, w  \in \cR_{i}  \right\} }$ implies  
$\cS_{i} \cap W_{\epsilon}^{cu}(y) \subset V(\cR_{k},\cR_{i})$.
\end{enumerate} 
\end{definition}
It follows from \cite[Theorem~2.4]{Bo73} that for any Markov
family $\mathbf R=\{\cR_i\}$ the flow is (bounded-to-one)  semiconjugate to a
suspension flow, with a bounded roof
  function also bounded away from zero, over the subshift
of finite type $\sigma_{\mathbf R}:\Sigma_{\mathbf R} \to
\Sigma_{\mathbf R}$ where $\Sigma_{\mathbf R} =\{ (a_i)_i\in
\mathbb Z : A_{a_i a_{j}}=1 \; \forall i, j \}$ and $A_{a_i
  a_{j}}=1$ if and only if there exists $x\in \Gamma' \cap
\cR_i$ and $P(x) \in \cR_j$.

The first step in the proof of Theorem~\ref{thm:expmix} is
to carefully choose local cross-sections for the
flow near the attracting basic set $\Lambda$.  Since
$\Lambda$ is an hyperbolic attractor then the local unstable
manifold of each point of the attractor is contained within
the attractor; see e.g.~\cite{Bo75}.  Hence for any
$x\in\Lambda$ and small enough $\epsilon>0$ we have
$W^u_\epsilon(x)\subset \Lambda$ and the $(d-1)$-submanifold
generated by the union of local stable manifolds through
points of $W^u_\epsilon(x)$,
\begin{align}\label{eq:localsec}
  \cS_\epsilon(x) = \bigcup_{y\in W^u_\epsilon(x)} W_\epsilon^{s}(y),
\end{align}
is a local cross-section containing $x$. 
Moreover, since the stable foliation of $\flow{t}:\Lambda \to \Lambda$ is
$\cC^2$ and $W^u_\epsilon(x)$ is a $\cC^2$-disk in $W^u(x)$,
then $\cS_\epsilon(x)$ is a codimension one
$\cC^2$-hypersurface foliated by $\cC^2$ local stable
manifolds. In addition, there exists a natural projection
$\pi_{\epsilon,x}:S_\epsilon(x)\to W^u_\epsilon(x)$ through
the stable leaves which is of class $C^2$, by construction.
In general there is no reason to expect
that these local cross-sections will be foliated by local unstable manifolds
even though it contains one local unstable manifold at the centre.

 \begin{lemma}\label{lem:markovfamily}
   Let $X^t: \M \to \M$ be an Axiom A flow 
     and $\Lambda\subset M$  an
     attractor.  There exists a finite number of
$\cC^2$ local cross-sections $\Sec = {\{
       \Sec_i\}}_i \subset \U$ such
     that the sets $\cR_i = \Sec_i \cap \Lambda$ are
     rectangles and $\mathbf{R} = {\{ \cR_i \}}_i$ is a
     Markov family for $\flow{t}$. Moreover, each
     hypersurface $\Sec_i$ is subfoliated by strong stable
     leaves and for each rectangle $\cR_i$ there exists $x_i
     \in \cR_i$ and a $\cC^2$-disk $\Delta_i\subset
     W^u_\epsilon(x_i)\subset \Lambda$ such that
     $\Sec_i=\cup_{y\in \Delta_i} \gamma^s(y)$ where
     $\gamma^s(y)$ is an open subset of $W_\epsilon^s(y)$
     that contains $y$. In addition, the projection
     $\pi_i:\Sec_i\to\Delta_i$ along stable manifolds is
     $C^2$ smooth.
 \end{lemma}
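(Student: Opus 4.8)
The plan is to carry out Bowen's construction of Markov families for Axiom~A flows, cf.~\cite[\S 2]{Bo73}, but using at every stage the special local sections $\cS_\epsilon(x)$ of~\eqref{eq:localsec} in place of arbitrary transverse disks, and then checking that the refinement procedure never destroys their $\cC^2$ stably-foliated structure. \emph{Initial proper family.} Fix a small $\epsilon>0$. For every $x\in\Lambda$ the flow box $X^{(-\epsilon,\epsilon)}(\cS_\epsilon(x))$ is a neighbourhood of $x$ and, as recalled just before the statement, $\cS_\epsilon(x)\cap\Lambda$ is a rectangle containing a relative neighbourhood of $x$ in $\Lambda$. By compactness of $\Lambda$ finitely many such flow boxes cover $\Lambda$; after the standard adjustments — reparametrising time so the covering uses the window $X^{[-\epsilon,0]}$, shrinking the sections so that distinct ones become flow-separated in the sense of condition~(3) of a proper family, and (if necessary) passing to the closure of the relative interior so that condition~(2) holds — one obtains a proper family $\mathbf R^{(0)}=\{\cR^{(0)}_j\}$ of size $\epsilon$ with $\cR^{(0)}_j=\cS_\epsilon(x_j)\cap\Lambda$ and $\cS_\epsilon(x_j)\subset U$. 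By the discussion preceding the lemma each $\cS_\epsilon(x_j)$ is a $\cC^2$ hypersurface subfoliated by $\cC^2$ local stable leaves over the $\cC^2$ disk $W^u_\epsilon(x_j)$, with $\cC^2$ projection $\pi_{\epsilon,x_j}$; and since $\Lambda$ is an attractor we have $W^u_\epsilon(x_j)\subset\Lambda$, so the unstable slices of $\cR^{(0)}_j$ are genuine open disks inside unstable manifolds — the rectangles are ``unstable-full''.

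\emph{Markov refinement.} I would then apply the refinement of~\cite[Theorem~2.4]{Bo73}, which partitions each $\cR^{(0)}_j$, up to boundary, into finitely many sub-rectangles cut out by finitely many pieces of local stable and local unstable manifolds through a finite set of boundary points, and produces a Markov family. The point to check is that each such cut of a section of the form~\eqref{eq:localsec} yields sub-hypersurfaces again of that form. A cut along a local stable manifold leaves the ambient hypersurface $\cS_\epsilon(x_j)$ unchanged, merely shrinking the rectangle, and keeps it unstable-full since the base disk is untouched. A cut along a piece of local unstable manifold $W^u_\epsilon(z)$ meets each stable fibre $W^s_\epsilon(y)\cap\cS_\epsilon(x_j)$, $y\in W^u_\epsilon(x_j)$, transversally in a single point (transversality of $\bE^{s}$ and $\bE^{u}$ inside the section), hence is a $\cC^2$ graph over $W^u_\epsilon(x_j)$ via $\pi_{\epsilon,x_j}$ and splits $\cS_\epsilon(x_j)$ into two $\cC^2$ sub-hypersurfaces, each a union of relatively open stable-leaf pieces over the same unstable disk. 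In either case the new section is a $\cC^2$ hypersurface subfoliated by $\cC^2$ stable leaves, and its stable projection is the restriction of $\pi_{\epsilon,x_j}$, hence $\cC^2$. This gives a Markov family $\mathbf R=\{\cR_i\}$ carried by $\cC^2$ sections with the structure claimed.

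\emph{Exact matching and main obstacle.} Finally, for each Markov rectangle $\cR_i$ I would pick $x_i$ in its relative interior, set $\Delta_i:=W^u_\rho(x_i)\subset\Lambda$ with $\rho$ small enough that $\Delta_i$ lies in the unstable slice of $\cR_i$ through $x_i$ (possible by unstable-fullness), and for $y\in\Delta_i$ take $\gamma^s(y)\subset W^s_\epsilon(y)$ to be the relatively open piece with $\gamma^s(y)\cap\Lambda$ equal to the stable slice of $\cR_i$ through $y$; then $\cS_i:=\bigcup_{y\in\Delta_i}\gamma^s(y)$ is $\cC^2$, satisfies $\cS_i\cap\Lambda=\cR_i$, is subfoliated by the $\cC^2$ leaves $\gamma^s(y)$, and carries the $\cC^2$ projection $\pi_i=\pi_{\epsilon,x_i}|_{\cS_i}$. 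The delicate step is exactly this matching, together with the compatibility assertion of the previous paragraph: one must show that the pieces $\gamma^s(y)$ can be chosen so as to glue into a $\cC^2$ hypersurface whose intersection with $\Lambda$ is \emph{precisely} the refined rectangle — i.e. that the refinement really only cuts along the leaves of the two transverse $\cC^2$ foliations of the host section and introduces no extraneous part of $\Lambda$ — so that Bowen's combinatorial bookkeeping is fully compatible with the rigid geometry of the sections~\eqref{eq:localsec}. The hyperbolicity estimates and the transversality $\bE^{s}\pitchfork(\bE^{c}\oplus\bE^{u})$ are what make this possible; carrying them through the refinement is the bulk of the work.
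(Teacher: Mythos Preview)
Your proposal is correct and takes essentially the same approach as the paper: start from the special stably-foliated sections~\eqref{eq:localsec} and feed them into Bowen's construction of Markov families, observing that the $\cC^{2}$ foliated structure and the $\cC^{2}$ projection survive because the refinement only manipulates pieces of these same sections. The paper's own proof is much terser---essentially a bare citation to \cite[Theorem~2.5 and \S7]{Bo73} (not Theorem~2.4, which is the semiconjugacy statement) together with the remark that the extra properties follow from the $\cC^{2}$ stable foliation and the attractor hypothesis---so your expanded account of the refinement step and your explicit flagging of the ``exact matching'' issue are details the paper leaves implicit.
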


 \begin{proof}
   We have shown~\eqref{eq:localsec} that through each point
     $x$ of $\Lambda$ there passes a $C^2$ codimension one
     disk transversal to the flow, whose diameter can be
     made arbitrarily small, formed by the union of stable
     leaves through the points of $W^u_\epsilon(x)$, and
     these disks have a $C^2$ smooth projection along the
     stable leaves onto the unstable disk $W^u_\epsilon(x)$.
   This is the starting point of the proof
     of \cite[Theorem~2.5]{Bo73}, detailed in \cite[Section
     7]{Bo73}, to show that the $\flow{t}:\Lambda \to
     \Lambda$ admits a Markov family as in the statement of
     lemma, consisting of a finite number of rectangles of
     arbitrarily small size $\epsilon$ contained in the
     interior of cross-sectional disks $\Sec = {\{
       \Sec_i\}}_i \subset \U$, each one endowed with an
     unstable disk $\Delta_i\subset\Sec_i$ and a projection
     $\pi_i$, as in the statement.  

     The extra properties of these disks are consequences of
     the initial construction of smooth local cross-sections to
     the flow taking advantage of the fact that $\Lambda$ is
     an attractor with a $C^2$ smooth stable
     foliation.\footnote{As pointed out to us by
       Mark Pollicott, a similar idea to the above was used by
       Ruelle~\cite{Ru76a}, modifying the construction of
       Bowen~\cite{Bo73} to produce local sections with improved
       regularity.}
 \end{proof}
 
Let $\Sec$ be as given by Lemma~\ref{lem:markovfamily}.
   Now we consider the flow on $U \supset \Lambda$ as a suspension
   flow, return map $P: \Sec \to\Sec$ and 
     return time $\tau : \Sec \to [\underline\tau,\bar\tau]$
     for some fixed $0<\underline\tau<\bar\tau<\infty$.
   Let $\cF_{s}$ denote the foliation of $\U$ by local
   stable manifolds. 
   Let $\Delta = \Sec \diagup \cF_{s}$ (the
   quotient of $\Sec$ with respect to the local stable
   manifolds).  A concrete realization of this quotient is
   given by $\Delta = \cup_{i} { \Delta_i }$. 
   A key point in this construction is
   that the return time $\tau$ is constant along the local
   stable manifolds and $C^2$-smooth. Quotienting along
   these manifolds we obtain a suspension semiflow over an
   expanding map $f: \Delta \to \Delta$. The fact that, by
   Lemma~\ref{lem:markovfamily}, the local cross-section $ \Sec =
   {\{\Sec_i\}}_i$ is foliated by local stable manifolds is
   essential.  
    We write
   $\widetilde\meb$ for the normalised  restriction of the Riemannian volume 
    to the family $\Delta$ of $C^2$ disks ${\{\Delta_i\}}_i$.
   Since $\mathbf{R}$ is a Markov family, this ensures the
   Markov structure of $f: \Delta \to \Delta$ and hence, for
   each $\Delta_i$, there exists a partition
   $\{\Delta_{i,j}\}_{j}$ of a full $\widetilde\meb$-measure subset of
   $\Delta_{i}$ such that $f : {\Delta_{i,j}}\to\Delta_j$ is
   a bijection.  The $\cC^2$-smooth regularity of the local
   cross-sections $\Sec_{i}$ and the flow $X^t$ is enough to
   guarantee that the return map $P: \Sec \to \Sec$ is also
   $\cC^{2}$ on each component. Because the projection $\pi$
   is also $\cC^2$-smooth, we conclude that
   $f:\De_{i,j}\to\De_j$ is a $\cC^{2}$ diffeomorphism for
   each $i$, $j$.
For future
 convenience, let $\pi : \Sec \to \De$ denote the collection
 of the projections $\pi_i$, so that $f\circ \pi=\pi\circ
 P$. 
 Since the return time function is constant along stable
 leaves, we also denote the return time function on $\Delta$ by
 $\tau : \Delta \to \bR_{+}$.

 Subsequently we wish to make the connection to the flows
 studied in~\cite{AvGoYoc}. It is therefore convenient to
 work with a full branch expanding map whereas the quotient
 return map $f : \cup_{i,j} \Delta_{i,j} \to \Delta$ is a
 transitive Markov expanding map but might not be full
 branch.  We consider an induced map to guarantee the full
 branch property.  Let $F$ denote the first return map to
 some element $\De_{0}$ of the Markov partition of $f$ (the
 choice of $\De_{0}$ is arbitrary and we could choose
 $\De_{0}$ from a refinement of the partition and proceed
 identically).  This induced system is a full branch Markov
 map $F = f^{R} : \cup_{\ell} \De^{(\ell)}_{0} \to \De_{0}$
 where ${\{\De_{0}^{(\ell)}\}}_\ell$ is a countable
 partition (the $\De_{0}^{(\ell)}$ are open sets) of a full
 measure subset of $\De_{0}$ and the first return time
 function $R:\Delta_0\to\bN$ is constant on each
 $\De^{(\ell)}_{0}$.  We define the induced return map
 $\widehat F$ over $\widehat\Delta:=\pi^{-1}\Delta_0$ by
 $\widehat F(x)=P^{R(\pi(x))}(x)$ and the induced return
 time $r:=\sum_{j=0}^{R\circ\pi-1} \tau
   \circ P^j$.  Let $\meb$ denote the normalised
 restriction of the Riemannian volume to $\Delta_0$.  For
 future convenience let $\widehat{\Delta}_{0}^{(\ell)} :=
 \pi^{-1} {\Delta}_{0}^{(\ell)}$ for each $\ell$.  Observe
 that $\widehat F$ and $r$ are the return map and return
 time respectively of the flow $X^{t}$ to the section
 $\widehat \Delta \subset U$ since $F$ was chosen as the
 first return of $f$ to $\Delta_{0}$.  Let $\Sec_{r} = \{
 (x,u) : x \in \widehat \Delta, 0\leq u < r( x)\}$ be the
 phase space of the suspension semiflow\footnote{Note that
   the skew product $\widehat{F}$ is invertible on its image
   but it is not onto. It is in this sense that the
   corresponding suspension is a semiflow and not a flow and
   corresponds to the fact that the attractor we are
   considering has zero volume.  This is the same use of
   terminology as~\cite{AvGoYoc}.} $F_{t} : \Sec_{r} \to
 \Sec_{r}$ which is defined by
 \begin{equation}
  \label{eq:defFt}
  F_{t}(x,u) = 
  \begin{cases}
   (x, u+t) &\text{whenever $u+t < r( x)$}\\
   (\widehat F(x), 0) &\text{whenever $0 \leq u+t - r( x) < r(\widehat{F}(x))$}.
  \end{cases}
 \end{equation}
 The flow is defined for all $t\geq 0$ assuming that the
 semigroup property ${F}_{t+s} = {F}_{t} \circ {F}_{s}$ holds. 
 The suspension flow is
conjugated to the original flow $\flow{t} : U \to U$ for
 some neighbourhood $U$ of the attractor by 
 \begin{equation}
 \label{eq:conjugacy}
 \Phi:\Sec_r\to
 U;
 \quad
  (x,u)\mapsto X^u(x).
 \end{equation}
 Note that $\Phi$ is invertible since $\widehat{F}$ is the first return to $\widehat{\Delta}_{0}$.
Furthermore $\Phi$ directly inherits the good regularity of $\flow{t}$.   
   
   In summary, up until this point, this section has been devoted to choosing the local section
   $\widehat \Delta \subset U$ and so representing $\flow{t}$ as a suspension with return map 
   $\widehat F$ and return time $r$.
   The special feature of this choice is that $r$ is constant on the 
 stable leaves in this local section, it preserves the required regularity 
   and is of the correct form to apply the results of~\cite{AvGoYoc} as we will see shortly.
The previous choice of local section $\Sec$ produced a transitive Markov return map $P$ and bounded return time $\tau$. Unfortunately the quotiented return map $f$ need not be full branch.
On the other hand $F$ is full branch Markov (countable partition) but now the return time could be unbounded.

 \begin{lemma}
 \label{lem:markovmap}
Let $F: \cup_\ell \De^{(\ell)}_0 \to \De_0$ be as defined above. 
The following hold:
 \begin{enumerate}
\item 
For each $\ell$,
  $F :\Delta_{0}^{(\ell)} \to\Delta_{0}$ is a $\cC^{1}$ diffeomorphism;
\item
 There exists $\la\in (0,1)$ such that 
  $\norm{\smash{DF^{-1}(x)}}\leq \lambda$ for all $x\in \cup_\ell \De^{(\ell)}_0$;
\item Let $J$ denote the inverse of the Jacobian of $F$ with respect to $\meb$.
 The function $\log J$ is  $\cC^1$ and
  $\sup_h \norm{ D((\log J)\circ h)}_{\cC^{0}}<\infty$, where the supremum is
  taken over every inverse branch $h$ of $F$.
\end{enumerate}
\end{lemma}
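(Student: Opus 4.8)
The plan is to derive each of the three properties from the $\cC^2$ structure already established for the return map $P:\Sec\to\Sec$ and the quotient map $f:\bigcup_{i,j}\De_{i,j}\to\De$, together with the uniform hyperbolicity of the flow. Recall from the discussion preceding the lemma that each branch $f:\De_{i,j}\to\De_j$ is a $\cC^2$ diffeomorphism (because $P$ is $\cC^2$ on each component, $\Sec$ is $\cC^2$-foliated by stable leaves, and the projection $\pi$ is $\cC^2$). Since $F=f^R$ restricted to $\De_0^{(\ell)}$ is a composition of finitely many such branches, $F:\De_0^{(\ell)}\to\De_0$ is a $\cC^2$ (hence $\cC^1$) diffeomorphism, and the full branch property is built into the definition of the induced map $F$. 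This gives item (1).

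For item (2), I would use the fact that $\Lambda$ is a hyperbolic attractor, so the unstable bundle $\bE^u$ is uniformly expanded: there are $C,\lambda_0>0$ with $\norm{D\flow{-t}|_{\bE^u}}\le Ce^{-\lambda_0 t}$ for all $t\ge0$. The quotient map $f$ is, up to the $\cC^2$ identifications $\pi_i$, conjugate to the action of the Poincar\'e return map on unstable disks; since return times are bounded below by $\underline\tau>0$, the derivative $Df$ expands by a definite factor on each branch after passing to a fixed iterate. Replacing $f$ by a high enough iterate (equivalently, by passing to a refined Markov partition, which as noted in the excerpt is harmless) gives a uniform bound $\norm{Df^{-1}}\le\lambda_1<1$ on all inverse branches of $f$. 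Since $F=f^R$ with $R\ge1$, the chain rule yields $\norm{DF^{-1}(x)}\le\lambda_1^{R(\cdot)}\le\lambda_1=:\lambda<1$ uniformly on $\bigcup_\ell\De_0^{(\ell)}$. One must be slightly careful that the norm bound is stated for the differential of $F^{-1}$ acting between the (at most $(d-1-\dim\bE^s)$-dimensional) unstable disks equipped with the induced Riemannian metric, but uniform continuity of the splitting over the compact set $\Lambda$ makes the passage from the flow's expansion rates to a metric estimate on $\De$ routine.

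For item (3), the inverse Jacobian $J$ of $F$ with respect to $\meb$ is, by construction, $J=\prod_{n=0}^{R-1}(J_f\circ f^n)$ where $J_f$ is the inverse Jacobian of $f$; since each $f:\De_{i,j}\to\De_j$ is a $\cC^2$ diffeomorphism between $\cC^2$ disks, $\log J_f$ is $\cC^1$ on each branch, and hence $\log J$ is $\cC^1$ on each $\De_0^{(\ell)}$. For the uniform bound $\sup_h\norm{D((\log J)\circ h)}_{\cC^0}<\infty$ over all inverse branches $h$ of $F$, I would expand $(\log J)\circ h$ along the orbit: writing $h=h_0\circ\dots\circ h_{R-1}$ as a composition of inverse branches of $f$, one gets $(\log J)\circ h = \sum_{n=0}^{R-1}(\log J_f)\circ(\text{partial composition})$, and differentiating, each term is controlled by $\norm{D\log J_f}_{\cC^0}$ (finite and uniform, since there are finitely many branches of $f$ each $\cC^2$ on a relatively compact domain) times the product of contraction factors from item (2), namely $\lambda_1^{n}$ or so. The telescoping geometric series $\sum_{n\ge0}\norm{D\log J_f}_{\cC^0}\lambda_1^{n}$ converges, giving the desired uniform bound; this is the standard bounded-distortion estimate for full-branch expanding maps.

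The main obstacle is item (2): one must honestly connect the uniform expansion of $DX^t$ along $\bE^u$ to a uniform expansion estimate for $Df$ on the quotient disks, and in particular argue that a fixed power of $f$ contracts inverse branches by a uniform factor less than one. This requires comparing the Riemannian metric on a local section $\Sec_i$ (restricted and pushed to $\De_i$ via $\pi_i$) with the intrinsic metric on the unstable manifold, using the $\cC^2$ regularity of $\pi_i$ and the transversality of the section to the flow, and then invoking that return times lie in a fixed interval $[\underline\tau,\bar\tau]$ so that after boundedly many returns the accumulated expansion exceeds $Ce^{-\lambda_0\underline\tau N}$'s reciprocal for $N$ large. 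Items (1) and (3) are then essentially bookkeeping built on the already-established $\cC^2$ smoothness and this expansion estimate.
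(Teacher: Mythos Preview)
Your proposal is correct and follows essentially the same route as the paper: item~(1) from the $\cC^2$ structure of the branches of $f$ and hence of $F=f^R$; item~(3) by the standard bounded-distortion computation, writing $(\log J)\circ h$ as a sum along the orbit and bounding the derivative by a geometric series using the contraction of inverse branches.

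The one place where the paper's argument differs slightly from yours is item~(2). You propose to first arrange $\norm{Df^{-1}}\le\lambda_1<1$ by ``replacing $f$ by a high enough iterate (equivalently, passing to a refined Markov partition)''. But refining the Markov partition for $f$ does not change the map $f$ (only its partition), and $f$ is a fixed first-return map, so you cannot simply substitute an iterate without reworking the section. The paper instead keeps $f$ as is, uses only the honest estimate $\norm{Df^{-n}}\le C\tilde\lambda^n$ (with a possible constant $C>1$), and observes that $\norm{DF^{-1}(x)}\le C\tilde\lambda^{R(x)}$; it then refines the partition from which $\De_0$ is chosen so that the \emph{inducing time} $R$ is uniformly large enough that $C\tilde\lambda^{R(x)}<1$. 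This sidesteps the need to make $f$ itself expand with constant $1$ (or to introduce an adapted metric). Your overall strategy is sound; only this mechanism for absorbing the constant $C$ needs to be stated as the paper does.
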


The statement of the above lemma is precisely the definition
of a $\cC^2$ uniformly-expanding full-branch Markov
map~\cite[Definition~2.2]{AvGoYoc}.  The only difference is
that the reference is more general, the domain is there
required just to be what they term a ``John
domain''~\cite[Definition 2.1]{AvGoYoc}, it is immediate
that $(\Delta_{0},\meb)$ satisfies the requirements since
$\Delta_{0}$ is a $\cC^{2}$ disk and $\meb$ is the
restriction of a smooth measure to $\Delta_{0}$.

\begin{proof}[Proof of Lemma~\ref{lem:markovmap}]
The required
regularity of $F$ and $\log J$ are satisfied since $F$ is
$\cC^{2}$.
The uniform hyperbolicity of the flow means that there exists 
$C>0$, $\tilde\lambda\in (0,1)$ such that 
$\norm{\smash{Df^{-n}(x)}}\leq C \tilde\lambda^{n}$ for all $x\in \De_{0}$, $n\in \bN$.
 The definition of the induced return map was
   based on the choice of some element of the Markov
   partition which we denoted by $\De_{0}$.  By choosing
   first a refinement of the Markov partition and then
   choosing $\De_{0}$ from the refined partition we may
   guarantee that the inducing time $R$ is as large as we
   require.
  Since  $\norm{\smash{DF^{-1}(x)}}\leq C \tilde\lambda^{R(x)}$   the uniform expansion estimate of item (2) follows for $\lambda = C \sup_{x} \tilde\lambda^{ R(x)} \in(0,1)$.
Finally, since
$\norm{D(\log Jf)}_{\cC^{0}}$ is bounded
\begin{equation}
 \label{eq:Droh}
 \begin{aligned}
  \norm{ D((\log JF)\circ h) }_{\cC^{0}}
  &=
  \norm{D(\smash{\sum_{j=0}^{R-1}}\log Jf\circ f^j\circ h) }_{\cC^{0}}\\
  &\le
  \sum_{j=0}^{R-1}\norm{D(\log Jf)(f^j\circ h)  D(f^j\circ
  h) }_{\cC^{0}}
  \\
  &\le
  C\sum_{j=0}^{R-1}\norm{D(f^{j}\circ h)}_{\cC^{0}}
  \le C\sum_{j\ge 0}^{R-1} \lambda^{R-j}
  \leq C\sum_{k\ge 0}^{\infty} \lambda^{k}.
\end{aligned}
\end{equation}
is uniformly bounded for all inverse branches $h$ of $F$,
proving item (3).  
\end{proof}

 Since $F$ is a uniformly expanding full-branch Markov map there exists
  a unique invariant probability measure which is absolutely continuous with
  respect to $\meb$. We denote this by $\nu$  and its density by $\vfi = \frac{d\nu}{d\meb}$. 
  Moreover we know that $\vfi\in \cC^1(\Delta_0,\RR^+)$ and is bounded
  away from zero.

 Let $d_{u}$, $d_{s}$ denote the dimension of $ \bE^{u}$, $ \bE^{s}$ respectively.
Taking advantage of the smoothness of the stable foliation and the smoothness of the section we may assume that $\widehat \Delta = \Delta_{0} \times \Omega$
where $\Delta_{0}$ is a $d_{u}$-dimensional ball, $\Omega$ is a $d_{s}$-dimensional ball (in particular compact) and that $\widehat{F} : (x,z) \mapsto (Fx, G(x,z))$ where $F$ is as before the uniformly expanding $\cC^{2}$ Markov map and $G$ is $\cC^{2}$ and contracting in the second coordinate. This puts us in the setting of~\cite{BM2015}.

Given $v:\widehat{\Delta}_{0}\to\bR$, define $v_+,v_-:\Delta_{0}\to\bR$ by setting
$v_+(x)=\sup_{z}v(x,z)$,
$v_-(x)=\inf_{z}v(x,z)$.
Using that $\nu$ is an $F$-invariant probability measure and the contraction in the stable direction we know
that the limits
\begin{equation}
\label{eq:defeta}
\lim_{n\to\infty}\int_{\Delta_{0}} {(v\circ \widehat F^n)}_+\,d\nu\quad\text{and}\quad
\lim_{n\to\infty}\int_{\Delta_{0}} {(v\circ \widehat F^n)}_-\,d\nu
\end{equation}
exist and coincide for all $v$ continuous.  Denote the common limit by $\eta(v)$.
This defines an $\widehat{F}$-invariant probability measure $\eta$ on $\widehat{\Delta_{0}}$ and $\pi_{*}\eta=\nu$ (see, for example,~\cite[Proposition 1]{BM2015}).

Let $\cL$ denote the transfer operator given by $\int_\Delta  g\circ F \cdot v \,d\nu=\int_\Delta g \cdot \cL v\,d\nu$.
By \cite[Propostion 2]{BM2015}, for any $v\in\cC^{0}(\widehat{\Delta},\bR)$,
 the limit
 \begin{equation}
 \label{eq:defetax}
	 \eta_x(v):=\lim_{n\to\infty} (\cL^{n}v_n)(x),\quad v_n(x):=v\circ \widehat F^n(x,0),
	 \end{equation}
	 exists for all $x\in\Delta$ and defines a probability measure supported on $\pi^{-1}(x)$ (without loss of generality we may assume that $0$ denotes some element of $\Omega$).
	 Moreover $x\mapsto \eta_{x}(v)$ is continuous and 
\begin{equation}
\label{eq:connecteta}
\eta(v) = \int_{\Delta} \eta_{x}(v) \ d\nu(x).
\end{equation}
Despite the fact that $\eta$ is singular along stable manifolds, we can take advantage of the regularity of the skew product form of $\widehat{F}$ (due to the regularity of the stable foliation) and the uniform hyperbolicity in order to prove rather good regularity for the  decomposition of $\eta$ into ${\{\eta_{x}\}}_{x\in \Delta_{0}}$.
By  \cite[Proposition 9]{BM2015}, this decomposition is $\cC^{1}$ in the sense that
 there exists $C>0$ such that, for any open set $\omega \subset \Delta$ and
for any $v\in \cC^1(\widehat \Delta,\bR)$, the function $x\mapsto \bar{v}(x) :=  \eta_x(v)$ is $\cC^{1}$ and
\begin{equation}
\label{eq:C1estimate}
	\sup_{x\in \omega}\norm{D\bar{v}(x)} \leq C \sup_{(x,z)\in \pi^{-1}\omega}\abs{v(x,z)} 
	+
	C \sup_{(x,z)\in \pi^{-1}\omega}\norm{Dv(x,z)}. 
\end{equation}

Since $\eta$ is an $\widehat F$-invariant probability measure $\widehat{\eta} := \frac{1}{\eta(r)} \eta \times \Leb$
is an ${F}_{t}$-invariant probability measure. 
Since this measure has absolutely continuous  conditional measure on unstable manifolds due to the connection to the  absolutely continuous $\nu$, 
we know that $\mu = \Phi_{*}\widehat{\eta}$  is the unique SRB measure for $\flow{t}:U \to U$.
\begin{remark}
In the following we will crucially use the result of \cite{AvGoYoc} concerning exponentially mixing hyperbolic suspension semiflows and consequently it is essential that $\nu$ is absolutely continuous with respect to Riemannian volume on $\Delta_{0}$. This is the reason we obtain a result for the SRB measure of the Axiom~A attractor and not for any other Gibbs measure.   
\end{remark}

Recall that the uniform hyperbolicity of the original flow implies
   that there exists $\kappa \in (0,1)$ and $C>0$ such that,
   for all $w_1,w_2 \in \Sec_i$ in the same local stable
   leaf, i.e. $\pi(w_1)=\pi(w_2)$, we have $ d(\widehat{F}^{n}
   w_1, \widehat{F}^{n} w_2) \leq C \kappa^{n} d(w_1,w_2)$ for all $n\in \bN$.
   As previously, in the proof of Lemma~\ref{lem:markovmap}, by choosing
   first a refinement of the Markov partition and then
   choosing $\De_{0}$ from the refined partition we may
   guarantee that the inducing time $R$ is a large as we
   require so that there exists contraction at each iteration by $\hat F$
   or, in other words, we may take $C=1$. For that choice of $\Delta_0$ we have
   the following result.

\begin{lemma}
\label{lem:hypskewproduct}
 Let $\widehat{F}: \cup_\ell \widehat{\Delta}_{0}^{(\ell)} \to \widehat{\Delta}_{0} $,  $F: \cup_\ell \Delta_{0}^{(\ell)} \to \Delta_{0} $ and $F$-invariant probability measure $\nu$  be as defined above. 

\begin{enumerate}
\item 
There exists a continuous map $\pi : \widehat \Delta_{0}
  \to \Delta_{0}$ such that $F\circ \pi = \pi \circ \widehat{F}$ whenever both members of the equality are defined;
\item
 There exists an $\widehat{F}$-invariant probability measure
  $\eta$ giving full mass to the domain of definition of $\widehat{F}$;
\item 
There exists a family of probability measures
  ${\{\eta_x\}}_{x\in \Delta_{0}}$ on $\widehat{\Delta}_{0}$ which is a
  disintegration of $\eta$ over $\nu$, that is, $x\mapsto
  \eta_{x}$ is measurable, $\eta_{x}$ is supported on
  $\pi^{-1}(x)$ and, for each measurable subset $A$ of
  $\widehat\De_{0}$, we have $\eta(A)=\int \eta_x(A)\,d\nu(x)$.
  Moreover, this disintegration is smooth: we can find a
  constant $C>0$ such that, for any open subset $\omega\subset
  \cup_{\ell} \Delta_{0}^{(\ell)}$ and for each $u\in \cC^1(\pi^{-1}(\omega))$,
  the function $\tilde u : \omega \to \RR, x\mapsto\tilde u(x):=\int
  u(y)\,d\eta_x(y)$ belongs to $\cC^1(\omega)$ and satisfies
  \begin{align*}
  \sup_{x\in \omega} \|D\tilde u(x)\|
  \leq C \sup_{(x,z)\in \pi^{-1}\omega}\abs{v(x,z)} 
	+  C \sup_{y\in \pi^{-1}(\omega)} \|Du(y)\|.
\end{align*}
\item 
There exists $\kappa \in (0,1)$ such that, for all
  $w_1,w_2 $ such that $\pi(w_1)=\pi(w_2)$, we have $ d(\widehat F w_1,
  \widehat F w_2) \leq
  \kappa d(w_1,w_2).  $
\end{enumerate}
\end{lemma}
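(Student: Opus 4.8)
The plan is to assemble Lemma~\ref{lem:hypskewproduct} directly from the constructions and cited results already established in this section, since each of the four items has essentially been verified in the preceding discussion; the work of the proof is to organize these facts and make the regularity assertion in item (3) precise.

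For item (1), the map $\pi:\widehat\Delta_0\to\Delta_0$ is the collection of the $\cC^2$ projections $\pi_i$ along stable leaves given by Lemma~\ref{lem:markovfamily}, restricted to $\widehat\Delta_0=\pi^{-1}\Delta_0$; the intertwining relation $F\circ\pi=\pi\circ\widehat F$ is immediate from $f\circ\pi=\pi\circ P$ together with the definition $\widehat F(x)=P^{R(\pi x)}(x)$ and $F=f^R$, and continuity was noted already. For item (2), the measure $\eta$ is the one constructed via the limits in~\eqref{eq:defeta}: the contraction in the stable direction forces the two limits to agree, so $\eta(v)$ is well-defined on continuous $v$, and $\widehat F$-invariance follows from $F$-invariance of $\nu$ by the standard argument; it gives full mass to the domain of $\widehat F$ because $\nu$ gives full mass to $\cup_\ell\Delta_0^{(\ell)}$, the domain of $F$. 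For item (4), the contraction constant $\kappa$ is exactly the contraction rate of $G$ in its second coordinate from the skew-product form $\widehat F:(x,z)\mapsto(Fx,G(x,z))$; since points $w_1,w_2$ with $\pi(w_1)=\pi(w_2)$ share the same first coordinate, $d(\widehat Fw_1,\widehat Fw_2)=d(G(x,z_1),G(x,z_2))\le\kappa d(z_1,z_2)=\kappa d(w_1,w_2)$.

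The substantive item is (3). Here I would invoke~\cite[Proposition~2]{BM2015} to get the fibre measures $\eta_x$ from~\eqref{eq:defetax}, the identity~\eqref{eq:connecteta} showing they disintegrate $\eta$ over $\nu$ with $\eta_x$ supported on $\pi^{-1}(x)$, and measurability of $x\mapsto\eta_x$ (in fact continuity). The smoothness estimate is precisely the content of~\cite[Proposition~9]{BM2015} transcribed in~\eqref{eq:C1estimate}, which applies because the skew product $\widehat F$ has the required $\cC^2$ regularity (from the $\cC^2$ stable foliation and $\cC^2$ section, via Lemma~\ref{lem:markovfamily}), $F$ is uniformly expanding full-branch Markov with $\cC^1$ log-Jacobian derivative bounds (Lemma~\ref{lem:markovmap}), and the fibre maps $G(x,\cdot)$ are uniformly contracting. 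So the proof of item (3) is just: apply Proposition~9 of~\cite{BM2015} with $\bar v(x)=\tilde u(x)=\eta_x(u)$ and rename the estimate~\eqref{eq:C1estimate}.

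The main obstacle — really the only non-bookkeeping point — is verifying that the hypotheses of~\cite{BM2015} genuinely hold for the system $(\widehat F,\nu)$ constructed here, in particular the uniform $\cC^2$ control on $\widehat F$ across the countably many branches and the absolute continuity and regularity of $\nu$. This has, however, been arranged by the preceding paragraphs: the regularity of $F$ and $\log J$ is Lemma~\ref{lem:markovmap}, the density $\vfi=d\nu/d\meb$ is $\cC^1$ and bounded away from zero, and the reduction $\widehat\Delta=\Delta_0\times\Omega$ with $\widehat F(x,z)=(Fx,G(x,z))$ puts us squarely in the framework of~\cite{BM2015}. Thus the proof will mostly consist of citing~\cite{BM2015} and pointing back to the earlier lemmas, with a brief remark that the estimate in item (3) is~\eqref{eq:C1estimate} with $v$ replaced by $u$.
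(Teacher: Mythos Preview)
Your treatment of items (1), (2), and (3) is correct and is exactly what the paper does: each item is dispatched by pointing back to the constructions already assembled in the preceding paragraphs (the projection $\pi$, the limits~\eqref{eq:defeta} defining $\eta$, and the fibre measures~\eqref{eq:defetax} together with the estimate~\eqref{eq:C1estimate} from~\cite{BM2015}).

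For item (4), however, you skip a point that the paper handles explicitly. You assert that $\kappa$ ``is exactly the contraction rate of $G$ in its second coordinate,'' taking for granted that $G(x,\cdot)$ contracts distances by a factor strictly less than $1$ on a \emph{single} application of $\widehat F$. But uniform hyperbolicity of the flow only yields the eventual estimate $d(\widehat F^{n} w_1,\widehat F^{n} w_2)\le C\kappa^{n} d(w_1,w_2)$ with a constant $C$ that may exceed $1$; the statement in the setup that $G$ is ``contracting in the second coordinate'' is loose in precisely this sense. The paper closes this gap by recalling (as in the proof of Lemma~\ref{lem:markovmap}) that one may first refine the Markov partition and then choose $\Delta_0$ from the refinement, so that the inducing time $R$ is as large as desired; since $d(\widehat F w_1,\widehat F w_2)\le C\kappa^{R} d(w_1,w_2)$, taking $R$ large enough absorbs the constant $C$ and makes the contraction immediate. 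Your argument for item (4) needs this refinement step, or at least an acknowledgement that the product coordinates and metric on $\widehat\Delta=\Delta_0\times\Omega$ have been adapted so that $C=1$.
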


The statement of the above proposition corresponds precisely with {\cite[Definition~2.5]{AvGoYoc}} and says, in their terminology, that $\widehat{F}$ is a \emph{hyperbolic skew-product} over $F$.

 \begin{proof}[Proof of Lemma~\ref{lem:hypskewproduct}]
 Item (1) is clear.
 Item (2) follows from the definition by the limits~\eqref{eq:defeta}.
 Item (3) follows from the definition~\eqref{eq:defetax} and the estimates~\eqref{eq:C1estimate}.
Property (4) is a consequence of the choice of $\Delta_0$.
\end{proof}

\begin{lemma}
\label{lem:goodroof}
Let $F: \cup_\ell \De_{0}^{(\ell)} \to \De_{0} $ and
 $r: \cup_\ell \De_{0}^{(\ell)} \to \bR_{+} $ 
 be the full branch Markov map with countable partition
 and the induced return time as defined before.
\begin{enumerate}
\item
There exists $r_0>0$ such that $r$ is bounded from below by $r_0$;
\item
There exists  $K>0$ such that $|D(r\circ h)| \leq K$ for
 every inverse branch $h$ of $F$.
\end{enumerate}
 \end{lemma}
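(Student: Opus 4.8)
The plan is to derive both items from the two structural facts established earlier: first, that the original return time $\tau : \Sec \to [\underline\tau, \bar\tau]$ is bounded away from zero and above and is $\cC^2$ along each component (so that in particular $\tau$ descends to $\Delta$ with $\norm{D(\tau\circ h')}_{\cC^0}$ uniformly bounded over inverse branches $h'$ of $f$); and second, that $F = f^{R}$ is the induced (first-return) map to $\De_0$ with $R$ constant on each $\De_0^{(\ell)}$, and $r = \sum_{j=0}^{R-1}\tau\circ f^{j}$ on $\De_0^{(\ell)}$. Item (1) is then immediate: on each $\De_0^{(\ell)}$ we have $R\ge 1$, so $r \ge \tau \ge \underline\tau$ pointwise, and we may take $r_0 = \underline\tau$.

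For item (2), fix an inverse branch $h$ of $F$; it maps $\De_0$ into a single partition element $\De_0^{(\ell)}$ on which $R \equiv R(\ell)$ is a fixed integer. On this element $r\circ h = \sum_{j=0}^{R-1}(\tau\circ f^{j}\circ h)$, so by the chain rule
\begin{align*}
  \abs{D(r\circ h)}
  &\le \sum_{j=0}^{R-1}\norm{D\tau\bigl(f^{j}\circ h\bigr)}\cdot\norm{D(f^{j}\circ h)}\\
  &\le \norm{D\tau}_{\cC^0}\sum_{j=0}^{R-1}\norm{D(f^{j}\circ h)}_{\cC^0}.
\end{align*}
Now $f^{j}\circ h$ is an inverse branch of the expanding map $F$ composed with $f^{R-1-j}$ (equivalently, an inverse branch of $f^{R-j}$, followed by nothing), and by item (2) of Lemma~\ref{lem:markovmap} together with the uniform-expansion estimate $\norm{Df^{-n}}\le C\tilde\lambda^{n}$ for $f$, one has $\norm{D(f^{j}\circ h)}_{\cC^0}\le C\tilde\lambda^{\,R-j}$ — this is exactly the geometric bound already exploited in~\eqref{eq:Droh}. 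Hence
\begin{align*}
  \abs{D(r\circ h)}
  \le C\norm{D\tau}_{\cC^0}\sum_{j=0}^{R-1}\tilde\lambda^{\,R-j}
  \le C\norm{D\tau}_{\cC^0}\sum_{k\ge 1}^{\infty}\tilde\lambda^{k}
  = \frac{C\norm{D\tau}_{\cC^0}\,\tilde\lambda}{1-\tilde\lambda} =: K,
\end{align*}
a bound independent of the branch $h$ (and of $\ell$, $R$), which gives item (2).

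I expect the only genuine point requiring care — the ``main obstacle'' — to be the verification that $\norm{D(f^{j}\circ h)}_{\cC^0}$ decays geometrically in $R-j$ uniformly over all inverse branches of $F$, i.e. that the tail of the composition is contracted even though the leading pieces $f^{j}$ for $j$ near $R-1$ are expanding on the range. This is handled exactly as in the proof of Lemma~\ref{lem:markovmap}: one reads the composition $f^{j}\circ h$ from the outside in as the inverse branch of $f^{R-j}$ that lands inside $\De_0^{(\ell)}$, so it contracts by at least $C\tilde\lambda^{R-j}$; there is no loss from the bounded pieces because we are never composing in the expanding direction. With that in hand everything else is a one-line summation, and the boundedness of $D\tau$ along stable-quotiented components follows from the $\cC^2$-regularity of the flow and of the sections $\Sec_i$ recorded in Lemma~\ref{lem:markovfamily}.
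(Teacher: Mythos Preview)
Your proof is correct and follows essentially the same approach as the paper: item~(1) uses $r\ge\underline\tau$ from $R\ge 1$, and item~(2) is precisely the computation the paper alludes to when it says ``follow the same estimates~\eqref{eq:Droh}'', replacing $\log Jf$ by $\tau$ and using the same geometric decay $\norm{D(f^{j}\circ h)}_{\cC^{0}}\le C\tilde\lambda^{R-j}$. You have simply written out explicitly what the paper leaves as a one-line reference.
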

\begin{proof}
We  note that $R(x)\underline\tau\le r(x)
   \le R(x) \bar\tau$ and so both items
    are consequences of the
   boundedness of $\tau$ (for item (2) we just
   follow the same estimates~\eqref{eq:Droh} in the
   proof of Lemma~\ref{lem:markovmap}).
\end{proof}

\begin{lemma}
\label{lem:exptails}
Let $r: \cup_{\ell} \De^{(\ell)}_{0} \to \bR_{+}$ be defined as before.
 There exists $\sigma_{0} > 0$ such that 
 $\int e^{\sigma_{0} r} \, d\meb <\infty$.
\end{lemma}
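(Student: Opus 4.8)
The claim is exponential tails for the induced return time $r$ with respect to Lebesgue measure $\meb$ on $\De_0$. The key is that $r$ is comparable to $R\bar\tau$ from above (Lemma~\ref{lem:goodroof}), so it suffices to control $\int e^{\sigma_0 \bar\tau R}\,d\meb$, and for this I only need exponentially small measure of the level sets $\{R = n\}$. Since $R$ is the first return time of $f$ to $\De_0$ and is constant on each partition element $\De_0^{(\ell)}$, I will show that the measure of the union of those branches with $R \geq n$ decays exponentially in $n$. This is a standard consequence of the uniform expansion and the bounded distortion already established in Lemma~\ref{lem:markovmap}.

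\medskip

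\textbf{Step 1: distortion and a uniform upper bound on branch measures.} By Lemma~\ref{lem:markovmap}(2), every inverse branch $h$ of $F$ satisfies $\norm{Dh} \leq \la < 1$; by item (3), $\log J$ has uniformly bounded $\cC^0$-derivative along branches, which gives the Renyi-type bounded distortion estimate: there is $D_0>0$ with $\frac{J(h(y))}{J(h(y'))} \leq D_0$ for all $y,y'$ in the common image $\De_0$ and all inverse branches $h$ of $F$ (here $J$ is the reciprocal Jacobian of $F$). Consequently, for each branch $\De_0^{(\ell)}$ with inverse branch $h_\ell$,
\begin{equation*}
\meb(\De_0^{(\ell)}) = \int_{\De_0} J(h_\ell(y))\,d\meb(y) \leq D_0\,\meb(\De_0)\, \inf_{\De_0} J\circ h_\ell.
\end{equation*}
I will not actually need this refined form; what I do need is cruder.

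\medskip

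\textbf{Step 2: exponential decay of $\meb\{R \geq n\}$.} The map $f : \De \to \De$ is a transitive Markov expanding $\cC^2$ map with finitely many branches, uniform expansion $\norm{Df^{-n}} \leq C\tilde\la^n$, and bounded distortion. For such a map the first-return time $R$ to a fixed partition element $\De_0$ has exponential tails: this is classical. Concretely, each element $\De_0^{(\ell)}$ with first-return time $R=n$ is mapped diffeomorphically onto $\De_0$ by $f^n$ with $\norm{D(f^n)^{-1}} \leq C\tilde\la^n$, hence has diameter (and, by bounded distortion together with $f^n(\De_0^{(\ell)}) = \De_0$, measure) at most $C' \tilde\la^{\,d_u n}\,\meb(\De_0)$ up to a uniform constant. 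Summing over the at most (number of branches of $f$)$^{\,n}$ such elements is too lossy directly, so instead I argue on the quotient symbolic level: the cylinders of length $n$ in $\De_0$ that have not yet returned correspond to admissible words avoiding the symbol of $\De_0$, and by bounded distortion $\meb$ of each such cylinder is comparable to $\prod$ of the branch Jacobians, whence $\meb\{R > n\}$ equals the $\nu$-measure (up to bounded distortion constants, using that $\nu \asymp \meb$ with density bounded above and below) of points not yet returned, which decays exponentially by the exponential mixing / Perron--Frobenius spectral gap of the finite-branch expanding Markov map $f$ restricted away from $\De_0$ (spectral radius $<1$ since $f$ is topologically mixing and every point eventually enters $\De_0$). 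Thus there exist $c>0$, $\theta\in(0,1)$ with $\meb\{R \geq n\} \leq c\,\theta^n$.

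\medskip

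\textbf{Step 3: conclusion.} Using $r \leq \bar\tau R$ from Lemma~\ref{lem:goodroof} and Step 2,
\begin{equation*}
\int e^{\sigma_0 r}\,d\meb \;\leq\; \int e^{\sigma_0 \bar\tau R}\,d\meb \;=\; \sum_{n\geq 1} e^{\sigma_0 \bar\tau n}\,\meb\{R = n\} \;\leq\; c\sum_{n\geq 1} e^{\sigma_0 \bar\tau n}\,\theta^n,
\end{equation*}
which is finite as soon as $\sigma_0 < -\bar\tau^{-1}\log\theta$. Choosing any such $\sigma_0 > 0$ proves the lemma.

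\medskip

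\textbf{Main obstacle.} The only non-formal point is justifying the exponential decay of $\meb\{R \geq n\}$ in Step 2: one must be careful that counting admissible symbolic words of length $n$ (which grows exponentially) does not swamp the geometric decrease of individual cylinder measures. The clean way around this is to invoke the spectral gap of the (sub-stochastic) transfer operator of $f$ associated to the Markov subshift with the state $\De_0$ removed — topological mixing of $\sigma_{\mathbf R}$ guarantees its spectral radius is strictly less than one — rather than a naive volume estimate. This is standard for finite-branch expanding Markov maps, but it is where the argument genuinely uses more than bald hyperbolicity; everything else is bookkeeping with the already-established regularity and boundedness of $\tau$.
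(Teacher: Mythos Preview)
Your proof is correct and follows the same overall architecture as the paper: bound $r\le\bar\tau R$, establish exponential tails $\meb\{R\ge n\}\le C\theta^n$ for the discrete first-return time, and sum. The only substantive difference is in how you obtain the exponential tail for $R$. The paper gives an elementary ``fixed proportion returns'' argument: by transitivity and finiteness of the Markov partition for $f$ there exist $n_0\in\bN$, $\beta>0$ and sets $Q_k\subset\Delta_k$ with $f^{n_0}(Q_k)\subset\Delta_0$ and $\meb(Q_k)\ge\beta\,\meb(\Delta_k)$; bounded distortion then shows that of the mass still in $A_n=\{R\ge n\}$ (a union of $n$-cylinders), at least a fraction $D^{-1}\beta$ lands in $\Delta_0$ within $n_0$ further iterates, so $\meb(A_{n+n_0})\le(1-D^{-1}\beta)\,\meb(A_n)$ and geometric decay follows directly. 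You instead invoke the spectral gap of the sub-stochastic transfer operator of $f$ restricted to the complement of $\Delta_0$. Both are standard and both need only irreducibility (transitivity), not mixing as you wrote; the paper's route is more self-contained and avoids any operator-theoretic input, while yours makes the mechanism (subcriticality of the taboo process) more transparent. Your Step~1 is correct but, as you note, unused.
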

\begin{proof}
Since $R$ was defined as the first return time of the uniformly expanding map $f$ to $\Delta_{0}$ we know\footnote{Recall that $f:\cup_{k}\Delta_{k}\to \Delta$ is a Markov expanding map with a finite partition (writing ${\{\Delta_{k}\}}_{k}$ instead of ${\{\Delta_{i,j}\}}_{i,j}$ for conciseness)
and that $F$ was chosen as the first return map of $f$ to the partition element $\Delta_{0}$ (return time denoted by $R$).
By transitivity and the finiteness of the original partition there exists $\beta>0$, $n_{0}\in \bN$,
$Q_{k}\subset \Delta_{k}$ such that $f^{n_{0}}(x) \in \Delta_{0}$ for all $x\in Q_{k}$ and
$\meb(Q_{k}) \geq \beta\meb(\Delta_{k})$.
Let $A_{n}:=\{x : R(x) \geq n\}$ and note that $A_{n}$ is the disjoint union of elements of the 
$n$\textsuperscript{th}-level refinement of the partition. Using bounded distortion ($D>0$)
\[
\frac{\meb(A_{n}\cap f^{-n}Q_{k})}{\meb(A_{n}\cap f^{-n}\Delta_{k})}
\geq
D^{-1} 
\frac{\meb(Q_{k})}{\meb(\Delta_{k})}.
\]
This means that $\meb(A_{n}\cap f^{-n}Q_{k}) \geq D^{-1}\beta \meb(A_{n})$, i.e., a fixed proportion
of the points which have not yet returned  to $\Delta_{0}$ will return to $\Delta_{0}$ within $n_{0}$
iterates.}
 that there exists $\alpha>0$, $C>0$ such that
 $\meb(\{ x\in \Delta_{0} : R(x) \geq  n \})\leq Ce^{-\alpha n}$ for all $n \in \bN$. 
 As $\tau$ is uniformly bounded and $r= \sum_{j=0}^{R-1} \tau \circ f^j$ the estimate of the lemma follows.
\end{proof}

Consider the following cohomology property known as the \emph{uniform non integrability} characteristic of the flow~\cite{Do98}: 
 \begin{itemize}
\item[(\uni):]
There does \emph{not} exist any
 $\cC^{1}$ function
$\gamma : \Delta_0  \to \bR$
 such that $r - \gamma \circ F + \gamma$ is
 constant on each $\smash{\Delta_{0}^{(\ell)}}$.
 \end{itemize}
 The above property is also described as ``$r$ not being
 cohomologous to a locally constant
 function''. 

\begin{lemma}
Suppose that
 $\widehat{F}: \cup_\ell \widehat{\De}_{0}^{(\ell)} \to \widehat{\De}_{0} $, 
 $r: \cup_{\ell} \De^{(\ell)}_{0} \to \bR_{+}$, 
and ${F}_{t}$ the suspension semiflow on $ \Sec_{r}$ preserving the measure $\widehat{\eta}$
are defined as before.
Further suppose that assumption (\uni) holds.
Then there exist $C>0$, $\delta>0$ such that, for all $\phi,\psi \in \cC^{1}(\Sec_{r})$, and for all $t\geq 0$,
\[
\abs{\int \phi\cdot \psi \circ {F}_{t} \ d\widehat{\eta}  - \int \phi  \ d\widehat{\eta} \cdot \int  \psi  \ d\widehat{\eta}} \leq 
C \norm{\phi}_{\cC^{1}}\norm{\psi}_{\cC^{1}}e^{-\delta t}.
\]
\end{lemma}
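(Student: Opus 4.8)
The plan is to deduce the lemma directly from the exponential mixing theorem for hyperbolic suspension semiflows of Avila, Gou\"ezel and Yoccoz~\cite{AvGoYoc}, since the preceding lemmas were set up precisely to verify its hypotheses. First I would assemble the checklist. By Lemma~\ref{lem:markovmap} (and the remark immediately following it), $F : \cup_{\ell}\De_{0}^{(\ell)} \to \De_{0}$ is a $\cC^{2}$ uniformly-expanding full-branch Markov map on the $\cC^{2}$ disk $\De_{0}$ with smooth reference measure $\meb$, and the associated absolutely continuous invariant probability $\nu$ has $\cC^{1}$ density bounded away from zero. By Lemma~\ref{lem:hypskewproduct}, $\widehat{F}$ is a hyperbolic skew-product over $F$ with invariant probability $\eta$ whose disintegration ${\{\eta_{x}\}}_{x}$ over $\nu$ is $\cC^{1}$-regular in the sense of~\eqref{eq:C1estimate}. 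By Lemma~\ref{lem:goodroof}, the roof $r$ is bounded below by $r_{0}>0$ and has $\abs{D(r\circ h)}$ uniformly bounded over inverse branches $h$ of $F$, and by construction (Lemma~\ref{lem:markovfamily}) $r$ depends only on the base point; hence $r$ is a good roof function of the form needed to define ${(F)}_{t}$ on $\Sec_{r}$. By Lemma~\ref{lem:exptails}, $\int e^{\sigma_{0}r}\,d\meb<\infty$ for some $\sigma_{0}>0$, which supplies the exponential-tail control needed to handle the unbounded roof. Finally, assumption $(\star)$ states verbatim that $r$ is not $\cC^{1}$-cohomologous to a function constant on the partition elements $\De_{0}^{(\ell)}$, which is the non-cohomology (uniform non-integrability) hypothesis in~\cite{AvGoYoc}.

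With this checklist complete, the exponential mixing theorem of~\cite{AvGoYoc} applies to the semiflow ${(F)}_{t}:\Sec_{r}\to\Sec_{r}$ and yields constants $C>0$, $\delta>0$ such that
\[
\abs{\int \phi\cdot\psi\circ{(F)}_{t}\,d\widehat{\eta} - \int\phi\,d\widehat{\eta}\cdot\int\psi\,d\widehat{\eta}} \leq C\norm{\phi}_{\cC^{1}}\norm{\psi}_{\cC^{1}}e^{-\delta t}
\]
for all $\phi,\psi\in\cC^{1}(\Sec_{r})$ and all $t\geq0$, which is the assertion of the lemma. The one point requiring genuine care, and the step I expect to be the main obstacle, is the identification of the invariant measure: \cite{AvGoYoc} proves mixing for a particular invariant probability for ${(F)}_{t}$, and one must check that it equals $\widehat{\eta}$. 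This is where I would invoke the constructions recalled above: that measure is characterised in~\cite{AvGoYoc} by exactly the disintegration relation~\eqref{eq:connecteta} over $\nu$ with fibre measures the $\eta_{x}$ (crossed with $\Leb$ in the flow direction), and $\eta$ was defined here — via the limits~\eqref{eq:defeta}, \eqref{eq:defetax} and the identity~\eqref{eq:connecteta} — precisely so as to match this; uniqueness of the disintegration over $\nu$ (Lemma~\ref{lem:hypskewproduct}(3)) then forces the two to agree, while the normalising factor $\frac{1}{\eta(r)}$ in $\widehat{\eta}=\frac{1}{\eta(r)}\,\eta\times\Leb$ is exactly the constant making $\eta\times\Leb$ a probability on $\Sec_{r}$, matching the convention of~\cite{AvGoYoc}.

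No further work is needed: every structural hypothesis has already been verified in Lemmas~\ref{lem:markovmap}--\ref{lem:exptails}, and $(\star)$ supplies the remaining assumption, so I would present the proof as a short cross-reference to each of these ingredients followed by a direct appeal to~\cite{AvGoYoc}.
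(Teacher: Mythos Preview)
Your proposal is correct and follows exactly the paper's approach: the paper's proof is a four-line cross-reference matching Lemma~\ref{lem:markovmap}, Lemma~\ref{lem:goodroof} together with $(\star)$, Lemma~\ref{lem:exptails}, and Lemma~\ref{lem:hypskewproduct} to the corresponding Definitions~2.2--2.5 of \cite{AvGoYoc}, and then invoking \cite[Theorem~2.7]{AvGoYoc}. Your additional care about identifying the invariant measure with $\widehat{\eta}$ is a welcome elaboration that the paper leaves implicit.
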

\begin{proof}
  In order to prove the lemma we make the connection to the
  systems studied in~\cite{AvGoYoc}.
  Lemma~\ref{lem:markovmap} corresponds
  to~\cite[Definition~2.2]{AvGoYoc};
  Lemma~\ref{lem:goodroof} combined with assumption (\uni)
  corresponds to~\cite[Definition~2.3]{AvGoYoc};
  Lemma~\ref{lem:exptails} corresponds
  to~\cite[Definition~2.4]{AvGoYoc}; and finally
  Lemma~\ref{lem:hypskewproduct} corresponds
  to~\cite[Definition~2.5]{AvGoYoc}.  This implies that the
  assumptions of~\cite[Theorem~2.7]{AvGoYoc} are satisfied
  and consequently that the suspension semiflow ${F}_{t}$
  mixes exponentially for $\cC^1$ observables.
\end{proof}
This ensures that the
   original flow $\flow{t} : U \to U$ also mixes
   exponentially for $\cC^1$ observables: if $\phi,\psi:U\to\RR$
   are $\cC^1$, then $\phi\circ\Phi,\psi\circ\Phi$ are
   $\cC^1$ observables on $\Sec_{r}$ ($\Phi$ is the conjugacy~\eqref{eq:conjugacy}) and so
   \begin{align*}
     \int \phi\cdot (\psi\circ X^t) \,d\mu=\int (\phi\circ\Phi)
     \,\cdot\, (\psi\circ X^t\circ\Phi) \,d\widehat{\eta} =\int
     (\phi\circ\Phi) \,\cdot\, (\psi\circ\Phi) \circ {F}_t\,d\widehat{\eta}
\end{align*}
goes exponentially fast to zero.  By an
approximation argument~\cite[Proof of Corollary 1]{Do98} this implies exponential mixing for
H\"older observables.

A more direct proof of Theorem~\ref{thm:main} would be to establish the open and denseness of the (\uni) assumption 
with respect to a given choice of cross-sections. 
However, since Theorem~\ref{thm:expmix} in terms of non joint-integrability of stable and unstable foliations is of independent interest, we choose now to make the connection between (\uni) and non joint-integrability.
 
  \begin{lemma}
  \label{lem:UNI}
  Suppose that
 $\widehat{F}: \cup_\ell \widehat{\De}_{0}^{(\ell)} \to \widehat{\De}_{0} $
 and
 $r: \cup_{\ell} \De^{(\ell)}_{0} \to \bR$
are defined as above.
 Property  (\uni) fails if and only if  the stable and unstable foliations  
of the underlying Axiom~A flow $\flow{t}:U\to U$  are jointly integrable.
  \end{lemma}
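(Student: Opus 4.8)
The plan is to translate the (non)-cohomology statement for the return-time function $r$ into a geometric statement about the stable and unstable foliations of the flow, exploiting the fact that $r$ was built as a sum of Poincar\'e return times $\tau$ along the orbit of the induced map $F=f^R$, and that $\tau$ itself is the return time to the Markov section $\Sec$. The key observation is standard in this circle of ideas (it goes back to Plante and to Dolgopyat's temporal distance function): the joint integrability of the stable and unstable foliations is equivalent to the vanishing of the \emph{temporal distance function} (or Bowen--Ruelle cocycle) on the section, and this in turn is equivalent to the return time $\tau$ being cohomologous to a locally constant function on the symbolic level. Since passing to the induced full-branch map $F$ and the induced time $r=\sum_{j=0}^{R-1}\tau\circ f^j$ is a cohomological operation that preserves this property, $(\star)$ failing for $r$ is equivalent to $\tau$ being cohomologous to a locally constant function for $f$, which is equivalent to joint integrability.

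More concretely, I would argue as follows. First, reduce from $r$ and $F$ back to $\tau$ and $f$: if $r-\gamma\circ F+\gamma$ is constant on each $\Delta_0^{(\ell)}$ for some $\cC^1$ function $\gamma$, then since $r=\sum_{j=0}^{R-1}\tau\circ f^j$ and $R$ is the first return time of $f$ to $\Delta_0$, a telescoping argument produces a $\cC^1$ function $u$ on $\cup_k\Delta_k$ (defined off $\Delta_0$ by an orbit sum and extended to $\Delta_0$ via $\gamma$, using transitivity of $f$) such that $\tau-u\circ f+u$ is constant on each partition element of $f$; the converse (from a coboundary for $\tau$ to one for $r$) is the same telescoping in the other direction. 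This is the standard equivalence between a coboundary for an induced system and a coboundary for the original system. Next, lift from the quotient map $f:\Delta\to\Delta$ to the Poincar\'e map $P:\Gamma\to\Gamma$: since $\tau$ is constant on stable leaves and $f=\pi\circ P$ via the $\cC^2$ projection $\pi$, the coboundary relation for $\tau$ under $f$ holds if and only if it holds for $\tau\circ\pi$ under $P$ (i.e.\ $\tau$ as a function on the section, constant on stable leaves), with transfer function $u\circ\pi$.

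Finally, and this is the heart of the matter, I would invoke the characterization of joint integrability via the Markov coding. By \cite[Proposition~3.3]{FMT} (quoted in the Introduction of this excerpt), the stable and unstable foliations are jointly integrable if and only if the flow is bounded-to-one semiconjugate to a suspension over the subshift $\Sigma_{\mathbf R}$ with a \emph{locally constant} roof function; and on the level of the subshift of finite type, a H\"older roof function (here the return time $\tau$, suitably coded) is cohomologous to a locally constant one precisely when such a semiconjugacy exists. Combined with the reductions above, this gives: $(\star)$ fails $\iff$ $r$ is a locally-constant coboundary for $F$ $\iff$ $\tau$ is a locally-constant coboundary for $f$ $\iff$ $\tau$ is cohomologous to a locally constant function on $\Sigma_{\mathbf R}$ $\iff$ the stable and unstable foliations are jointly integrable. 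I expect the main obstacle to be the bookkeeping in the two reductions (from $F$ to $f$, and from $f$ to $P$), in particular making sure the transfer functions one constructs by orbit sums are genuinely $\cC^1$ (not merely H\"older or measurable) --- here one uses crucially the uniform expansion of $F$ from Lemma~\ref{lem:markovmap}(2), the distortion bounds of Lemma~\ref{lem:markovmap}(3), and the $\cC^2$ regularity of $\pi$ and of the sections, exactly as in the geometric-series estimate \eqref{eq:Droh} --- and in checking that "locally constant on the symbolic side" corresponds correctly to "locally constant on each $\Delta_0^{(\ell)}$" under the coding. The identification of locally-constant-coboundary with joint integrability itself is then a direct citation of the structure established in \cite{Bo73} and \cite[Proposition~3.3]{FMT}.
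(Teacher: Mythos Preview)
Your approach is genuinely different from the paper's, and the reduction step you flag as ``the main obstacle'' is in fact a real gap. The paper does \emph{not} reduce from $(r,F)$ to $(\tau,f)$ and then invoke \cite[Proposition~3.3]{FMT}; instead it argues geometrically and directly. Given a $\cC^1$ transfer function $\gamma$ witnessing the failure of~$(\star)$, the paper uses $\gamma$ to write down, in the suspension coordinates, an explicit $(d_s+d_u)$-dimensional hypersurface through each point, namely $\{(z_s,z_u,a-\gamma(z_u)+\gamma(x_u))\}$, and checks from the cohomology relation that the associated section of the tangent bundle is invariant under ${(F)}_t$. But at points of the form $(0,x_u,a)$ the construction of the local sections~\eqref{eq:localsec} forces this invariant section to contain both $\bE^s$ and $\bE^u$; uniqueness of the hyperbolic splitting then pins down the section at all points and forces $\gamma$ to be piecewise constant, hence $r$ itself is locally constant (a stronger conclusion than ``cohomologous to locally constant''). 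The invariant $(d_s+d_u)$-foliation is then read off directly, giving joint integrability. The converse direction is the same as yours.

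The specific problem with your telescoping is the direction ``$r-\gamma\circ F+\gamma$ constant on each $\Delta_0^{(\ell)}$ $\Rightarrow$ $\tau-u\circ f+u$ constant on each element of the \emph{finite} partition''. Defining $u$ off $\Delta_0$ by an orbit sum, as you propose, presupposes that you already know the finitely many target constants $d_k$; and solving for these $d_k$ amounts to a countable linear system (one equation per return path $\ell$, asserting that the path-sum of the $d_k$'s equals $c_\ell$) in finitely many unknowns. There is no a priori reason the $c_\ell$'s satisfy the required compatibility --- equivalently, the periodic-orbit data visible to $F$ (orbits through $\Delta_0$) is a strict subset of that for $f$, so Liv\v{s}ic for $F$ does not immediately yield Liv\v{s}ic for $f$. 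Your opening paragraph hints at the right fix: bypass $(\tau,f)$ entirely and relate the failure of $(\star)$ \emph{directly} to the vanishing of the temporal distance function $\varphi$ on $\widehat\Delta$ (which is section-independent), then cite the standard equivalence $\varphi\equiv 0 \Leftrightarrow$ joint integrability. But as written, the telescoping route does not close, and the citation of \cite[Proposition~3.3]{FMT} supplies only the implication ``jointly integrable $\Rightarrow$ locally constant suspension'', not the converse you need.
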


 \begin{proof}
  First suppose that (\uni) fails and so there exists a $\cC^{1}$ function $\gamma : \cup_{\ell} \Delta^{(\ell)}_{0}
  \to \bR$ such that $r - \gamma \circ F + \gamma$ is  constant on each $ \Delta^{(\ell)}_{0}$.
  For notational convenience extend $\gamma$ to $\hat\Delta^{(\ell)}_{0}$ as the function which is constant along local stable manifolds.
  This means that $r - \gamma \circ \hat{F} + \gamma$ is  constant on each $ {\hat\Delta}^{(\ell)}_{0}$.
 %
  For each $\ell$ fix some $x_{\ell} \in
  \overline{\hat\Delta_{0}^{(\ell)}}$ such that $\gamma(x) \leq
  \gamma(x_{\ell})$ for all $x\in {\hat\Delta}_{0}^{(\ell)}$
  (recall that $\gamma:\Delta_0\to\bR$ is a
    $C^1$ function).  Choosing, if required, a refinement
  of the Markov partition we guarantee that
  $\gamma(x_{\ell}) - \gamma(x) \leq \inf r$.  Since
  $\gamma$ is a $\cC^1$ function consider (for each $\ell$)
  the $(d_{u}+d_{s})$-dimensional hypersurface
 \begin{equation}
 \label{eq:surfacesss}
\hypersur{\ell}
:= 
\left\{(x, \gamma(x_{\ell}) -\gamma(x)) : x \in {\hat\Delta}_{0}^{(\ell)}\right\}
\subset  \Sec_{r}.
 \end{equation}
 We claim that the return time function to this family of local cross-sections $\cup_{\ell} \hypersur{\ell}$ is locally constant. 
 For some $\ell, \ell'$ consider the set $(x,a) \in \hypersur{\ell}$ such that $\hat{F}(x) \in  {\hat\Delta}_{0}^{(\ell')}$
 For $x$ such that $0 < t - r(x)  < r (Fx)$ 
\[
 F_t (( x, \gamma(x_{\ell}) -\gamma(x) ))
 	 = ( \hat{F}x, t + \gamma(x_{\ell}) -\gamma(x) - r(x))
\]
 Then the first $t>0$ so that
 $
 F_t ((x, \gamma(x_{\ell}) -\gamma(x)))  \in \cup_{\ell'}\hypersur{\ell'}
 $
  is given as a solution of 
  $
  t+ \gamma(x_{\ell})-\gamma(x) - r(x) = \gamma(Fx_{\ell'}) -\gamma (Fx)
  $
  or, equivalently, 
  $$
    t = [\gamma(Fx_{\ell'})  - \gamma(x_{\ell})] + [r(x) -\gamma (Fx) + \gamma(x)],
  $$
which is locally constant since it does not depend on $x \in {\hat\Delta}^{(\ell)}_{0} \cap \hat{F}^{-1}{\hat\Delta}^{(\ell')}_{0}$.
 This proves that 
 \begin{equation}\label{eq:surface}
\left\{(x, a+ \gamma(x_{\ell}) -\gamma(x) : x \in {\hat\Delta}^{(\ell)}_{0}, a\in \mathbb R^+ \right\}
 \end{equation}
 defines a codimension-one invariant $\cC^{1}$-foliation
 transversal to the flow direction, which implies that the
 stable and unstable foliations are jointly integrable.
 This shows that if (UNI) fails, then we
   obtain joint integrability.  The other direction of the
 statement is well known~\cite{FMT}.
 \end{proof}
 
%
 
 \section{Robust exponential mixing}
 \label{sec:mainproof}
 
 The purpose of this section is to prove
 Theorem~\ref{thm:main}.  We make use of 
 Theorem~\ref{thm:expmix} by constructing open sets of Axiom A flows that have
 attractors whose stable and unstable foliations are not jointly integrable in a robust way. 
Let $\M$
 be a Riemannian manifold of dimension $d\geq 3$.  It is
 sufficient to prove that there exists an
   open set of vector fields supported on the
   $d$-dimensional hypercube $D^{d} := (0,1)^{d}$ satisfying
   the conclusion of Theorem~\ref{thm:main}.

 \begin{lemma}\label{lem:exists}
   For any $d \geq 3$ there exists a vector field $X \in
   \fX^{3}(\bR^{d})$ such that $\flow{t} : \bR^{d} \to
   \bR^{d}$ exhibits an Axiom~A attractor contained within
   $(0,1)^{d}$. Moreover the domination
   condition~\eqref{eq:kdomination} holds.
 \end{lemma}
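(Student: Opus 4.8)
The plan is to build the example explicitly by taking a well-understood low-dimensional hyperbolic attractor — the Smale--Williams solenoid — and embedding/suspending it so that the domination condition~\eqref{eq:kdomination} becomes a matter of tuning the contraction and expansion rates. First I would treat the base case of a suspension of a hyperbolic toral automorphism (or, more flexibly, of an expanding map of the circle) in dimension $d$; concretely, in $\bR^{3}$ one takes the classical solid-torus map $f$ of Smale which wraps the solid torus $\mathbb{D}^2 \times S^1$ twice around inside itself, contracting the $\mathbb{D}^2$-directions by a factor $\lambda$ and expanding the $S^1$-direction by a factor close to $2$, and then realizes $f$ as the time-one map of a flow by the standard suspension construction, smoothing near the gluing region. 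The resulting vector field can be taken $\cC^{\infty}$, and a linear change of coordinates places the attractor inside the open cube $(0,1)^{3}$. For $d > 3$ one simply takes the product of this construction with a strong linear contraction in the remaining $d-3$ coordinates (or, equally well, iterates the solenoid construction), which only makes the domination estimate easier.

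The key steps, in order, are: (i) write down the generating diffeomorphism $f$ of the solid torus with contraction rate $\lambda \in (0,1)$ in the stable directions and expansion rate $\beta > 1$ in the unstable direction, chosen so that $\lambda\beta^{2} < 1$ — this is the algebraic heart of the matter and is clearly satisfiable, e.g.\ $\beta = 2$ and $\lambda < 1/4$; (ii) suspend $f$ to obtain a $\cC^{3}$ (indeed $\cC^{\infty}$) vector field $X$ on a $d$-manifold, verifying that the suspension is Axiom~A with a non-trivial hyperbolic attractor $\Lambda$ (this is classical, cf.\ Smale~\cite{Sm67} and Bowen~\cite{Bo73}); (iii) compute $\sup_{x\in\Lambda}\norm{D\flow{t_0}_x|_{\bE^s}} \cdot \norm{D\flow{t_0}_x}^{2}$ for the time-$t_0$ map, where $t_0$ is the suspension return time: the stable norm is $\sim \lambda^{t_0}$ (times a uniformly bounded flow-direction and smoothing distortion factor), while $\norm{D\flow{t_0}_x}$ is controlled by the top expansion rate $\sim \beta^{t_0}$, so the product is $\sim (\lambda\beta^{2})^{t_0}$ up to a uniformly bounded constant, which is $< 1$ for $t_0$ large by the choice in step (i); (iv) finally, transport everything into $(0,1)^{d}$ by an affine conffomorphism and note that the vector field can be cut off outside a neighbourhood of $\Lambda$ contained in the cube, since $\Lambda$ is an attractor with a trapping region, preserving the Axiom~A property and the attractor.

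The main obstacle I expect is step (iii): one must be careful that the flow-direction derivative and the distortion introduced by the smoothing of the suspension near the gluing locus do not spoil the estimate $\norm{D\flow{t_0}_x|_{\bE^s}}\cdot\norm{D\flow{t_0}_x}^2 < 1$. These extra factors are uniformly bounded in $x$ and subexponential in $t_0$, so increasing $t_0$ absorbs them — but making this precise requires choosing the suspension roof function bounded and bounded away from zero, controlling $\norm{D\flow{t_0}_x}$ on all of $\Lambda$ (not merely on the invariant subbundles, since \eqref{eq:kdomination} involves the full derivative), and keeping the smoothing region small enough that the local expansion/contraction rates stay within a fixed multiple of $\beta$ and $\lambda$. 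A clean way to sidestep most of the bookkeeping is to first arrange the generating map $f$ to be a genuine linear skew-product on $\mathbb{D}^2\times S^1$ away from a thin gluing collar and to take the suspension roof constant there; then the estimate is exact off the collar and only a bounded correction is needed on it. Once \eqref{eq:kdomination} holds for this $X$, Theorem~\ref{thm:regularity} gives the $\cC^{2}$ stable foliation and the construction feeds directly into the argument of Section~\ref{sec:mainproof}.
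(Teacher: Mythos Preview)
Your overall strategy --- take a known Axiom~A attracting diffeomorphism, suspend it to a flow, tune the contraction rate so that~\eqref{eq:kdomination} holds, and add extra contracting directions for higher $d$ --- is exactly the paper's approach. The gap is in your concrete choice of generating map: it does not cover the base case $d=3$. The Smale solenoid map acts on the \emph{three}-dimensional solid torus $\mathbb{D}^2\times S^1$; its suspension is therefore a flow on a four-dimensional manifold, not a three-dimensional one. (Nor can the solenoid map be realised as the time-one map of a flow on the solid torus itself, since it wraps the core circle twice around and is not isotopic to the identity.) The paper makes precisely this distinction: it mentions the solenoid as an alternative for $d=4$, but for $d=3$ it starts instead from the \emph{Plykin attractor}, a $\cC^\infty$ diffeomorphism of a bounded planar region with a non-trivial hyperbolic attractor, whose suspension then lives in three dimensions and embeds in $\bR^3$ as a solid torus.

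Your parenthetical alternatives have the same sort of trouble. A hyperbolic toral automorphism suspends to an Anosov flow, but the paper observes that~\eqref{eq:kdomination} forces volume contraction, hence a zero-volume attractor, so the flow cannot be Anosov; and an expanding circle map is not invertible, so it has no suspension flow in the usual sense. Once you replace the generating diffeomorphism by the Plykin map (or any other $\cC^3$ planar diffeomorphism with a non-trivial hyperbolic attractor), the rest of your outline --- tuning $\lambda$ so that $\lambda\beta^2<1$, absorbing the bounded smoothing distortion by taking $t_0$ large, cutting off inside the cube --- goes through as you describe and matches the paper.
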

 \begin{proof}
 The \emph{Plykin attractor}~\cite[\S8.9]{robinson1999} is a smooth diffeomorphism of a bounded subset of $\bR^{2}$ which exhibits an Axiom~A attractor. 
By \cite[\S2]{newhouse1978} we may use the existence of an Axiom~A diffeomorphism on $D^{2}$ to construct an Axiom~A flow exhibiting an attractor in a bounded subset of $D^{2} \times S^{1}$. This can be embedded in $\bR^{3}$ as a solid torus. It is a simple matter to ensure that the contraction in the stable direction is much stronger that the expansion in the unstable direction so that the domination condition~\eqref{eq:kdomination} holds. 
An attractor of higher dimensions is achieved by a similar construction but with additional uniformly contracting directions added.
 \end{proof}
 
 The above construction is far from being the only possibility. To construct a four dimensional flow another obvious choice is to start with the \emph{Smale Solenoid Attractor}. This is an Axiom~A diffeomorphism in 
$\bR^{n}$ which exhibits an attractor $\Lambda$ with one-dimensional unstable foliation and
 $(n-1)$-dimensional stable foliation. Constructing a flow from this as per the above proof gives an $(n+1)$-dimensional Axiom~A flow exhibiting an attractor. For a wealth of possibilities in higher dimension we may take advantage of the work of Williams~\cite{Williams1974} on expanding attractors. By this the expanding part of the the system is determined by a symbolic system of an $n$-solenoid, this means that the expanding part of the system may be any dimension desired. Then he shows that the stable directions may be added and the whole system embedded as a vector field on $\bR^{d}$.
 
Since the domination condition~\eqref{eq:kdomination} is
open  we obtain the following as an immediate
consequence of Lemma~\ref{lem:exists} and of the regularity
given by Theorem~\ref{thm:regularity}: There exists a
$C^1$-open subset $\mathcal U\subset \mathfrak{X}^2(M)$ of
Axiom~A flows exhibiting an attractor $\Lambda$
with $\cC^{2}$ stable foliation.  Then, using
Theorem~\ref{thm:expmix}, for any $X\in \cU$ the
corresponding  attractor for the Axiom A flow
$\flow{t} : \M \to \M$ admits a $C^2$ stable foliation and,
consequently the unique SRB measure mixes exponentially as long as  the stable foliation and unstable foliation are not jointly integrable. 
Since it is known since Brin (see~\cite{FMT} and references within) that the non integrability of the stable and unstable foliations is $\cC^r$-open and dense 
this completes the proof of Theorem~\ref{thm:main}.

\appendix

\section{Erratum to ``Open sets of Axiom A flows with
  Exponentially Mixing Attractors''}
\label{sec:erratum-open-sets}

In ``Open sets of Axiom A flows with exponentially mixing
attractors'' there was an oversight concerning the
regularity of the Markov partition for higher dimensional
flows.  Previously~\cite[p.2978]{ArBuVa}\footnote{In this
  file, the comment following the statement of
  Lemma~\ref{lem:markovmap}.} we claimed that any element of
a Markov partition of a non-trivial attractor for an Axiom A
vector field, after quotienting out the stable leaves, is a
{$C^2$ disk, hence a} ``John domain'' \cite[Definition
2.1]{AvGoYoc}.  The argument for exponential mixing
presented in the paper relies on the application of
results~\cite[Theorem 2.7]{AvGoYoc} where the ``John
domain'' condition is required.

Bowen~\cite{Bowen1978} showed that, for higher
dimensional systems, the boundaries of the Markov partition
elements cannot be smooth (here smooth means piecewise~\(\cC^{1}\)), in particular the objects of interest cannot be expected to be $C^2$ disks as previously claimed. In general there is no reason to expect that the elements of the Markov partition are John domains.
 When the unstable bundle is higher dimensional and the expansion is not isotropic then there seems no hope that the sets are John domains (for evidence of this consult the estimates and comments in \cite[\S A.2]{BW17}).

Here we show that the originally claimed result remains valid.
First observe that, as previously described~\cite[\S4]{ArBuVa}, for any \(d\geq 3\), it is possible to construct 
 examples of vector fields with Axiom~A attractors which have 1D unstable bundle and which satisfy the requirements of the rest of the construction.
If the unstable bundle is 1D then the relevant element of the Markov partition is a John domain. This is because it is constructed \cite[\S3]{ArBuVa} as the connected subset of a local unstable manifold. Such a connected 1D set is automatically a John domain.
The above observation gives immediately the main results as follows.
\begin{maintheorem}\label{thm:main}
  Given any  Riemannian manifold $\M$ of dimension
  $d\geq 3$ there exists a $\cC^{1}$-open subset of
  $\cC^3$-vector fields $\cU \subset \fX^3(M)$ such that for
  each $X\in\cU$ the associated flow is Axiom~A and exhibits
  a non-trivial attractor  {\(\Lambda\)} which mixes exponentially with
  respect to the unique SRB measure  {of \(\Lambda\)}.
\end{maintheorem}
 \begin{maintheorem}\label{thm:expmix}
   Suppose that $\flow{t} : \M \to \M$ is a $\cC^2$ Axiom~A
   flow, $\Lambda$ is an attractor  {(in particular closed and topologically transitive)},  that the stable foliation is
   $\cC^2$ and that the unstable bundle is one-dimensional.   
   If the stable and unstable foliations
    are not jointly integrable
    then the flow mixes exponentially with
   respect to the unique SRB  {of $\Lambda$}.
    \end{maintheorem}
 \noindent
The first theorem stands exactly as previously stated~\cite[Theorem A]{ArBuVa}.
The seconds remains as previously stated~\cite[Theorem B]{ArBuVa} except for the addition of the assumption that the unstable bundle is one-dimensional.

The restriction that each of the flows constructed must have unstable bundle which is 1D could be removed using further improvements of the methods. 
Studying exponentially mixing Anosov flows Butterley \& War~\cite{BW17} showed that elements of the Markov partition of the system obtained by quotienting along local stable manifolds can be identified with a finite number of connected subsets of $\mathbb R^d$ which satisfy some weak geometric properties~\cite[Appendix A]{BW17}. Moreover these properties suffice for showing exponential mixing of the relevant suspension semiflow (a condition inspired by John domains but weaker is used). 
It appears that the resultant theorem~\cite[Theorem 1]{BW17} holds true (with the same proof) in the case of Axiom~A attractors and not only for Anosov flows (i.e., Theorem~\ref{thm:expmix} without requiring the 1D unstable bundle).
 Since the argument involves reducing the problem to an expanding semiflow by quotienting the key is to show that the quotient system satisfies the assumptions of the theorem~\cite[Theorem 3]{BW17} concerning exponential mixing of expanding semiflows. All the hyperbolicity properties are as before, the new details are the geometric properties (as proven in \cite[Appendix A]{BW17}). The key observation is that the systems of interest are attractors and not just Axiom~A which means the system after quotienting along local stable manifolds sufficiently resembles the one obtained if the original system were Anosov. This is because, for a hyperbolic attractor, the local unstable manifold of each point of the attractor is contained within the attractor.


\end{document}